\documentclass[hidelinks,onefignum,onetabnum]{siamart251216}

\usepackage{amsfonts}
\usepackage{graphicx}
\usepackage{epstopdf}
\usepackage{booktabs}
\usepackage{enumitem}
\usepackage{mathtools}
\usepackage[normalem]{ulem}
\usepackage{algorithm}
\usepackage{algorithmic}
\usepackage{float} 
\usepackage{cite}  

\graphicspath{{images/}}
\ifpdf
  \DeclareGraphicsExtensions{.eps,.pdf,.png,.jpg}
\else
  \DeclareGraphicsExtensions{.eps}
\fi

\providecommand{\qedhere}{}

\newsiamremark{remark}{Remark}
\newsiamremark{example}{Example}
\crefname{example}{Example}{Examples}

\newcommand{\R}{\mathbb{R}}
\newcommand{\ip}[2]{\left\langle #1,\, #2 \right\rangle}
\newcommand{\grad}{\nabla}

\newcommand{\Hsav}{H_{\mathrm{SAV}}}
\newcommand{\Hdg}{H_{\mathrm{DG}}}
\newcommand{\Hhat}{\widehat{H}}
\newcommand{\Esav}{E_{\mathrm{SAV}}}
\newcommand{\Edg}{E_{\mathrm{DG}}}
\newcommand{\Ehat}{\widehat{E}}

\headers{A Unified Discrete Gradient--SAV Framework}{E.~Celledoni, D.~Mart\'in de Diego, B.~Owren, and M.~Vaquero}

\title{A Unified Discrete Gradient--SAV Framework for Structure-Preserving Integration}

\author{
  Elena Celledoni\thanks{NTNU, Trondheim, Norway (\email{elena.celledoni@ntnu.no}).}
  \and David Mart\'in de Diego\thanks{ICMAT (CSIC), Madrid, Spain (\email{david.martin@icmat.es}).}
  \and Brynjulf Owren\thanks{NTNU, Trondheim, Norway (\email{brynjulf.owren@ntnu.no}).}
  \and Miguel Vaquero\thanks{IE University, Madrid, Spain (\email{miguel.vaquero@ie.edu}).}
}

\ifpdf
\hypersetup{
  pdftitle={A Unified Discrete Gradient--SAV Framework for Structure-Preserving Integration},
  pdfauthor={E. Celledoni, D. Mart\'in de Diego, B. Owren, M. Vaquero}
}
\fi

\begin{document}

\maketitle

\begin{abstract}
{
We present a framework combining discrete gradient (DG) methods with the Scalar Auxiliary Variable (SAV) approach to construct structure-preserving integrators for dissipative and conservative systems. The key observation is that SAV quadratization lifts the dynamics to an extended state space on which the modified energy has an exact discrete-gradient identity. This viewpoint yields three integrators with different accuracy and cost profiles: a first-order semi-implicit Forward Euler scheme, a second-order self-adjoint Midpoint scheme, and a second-order Predictive scheme with reduced implicitness. The construction extends to almost-Poisson systems and preserves selected Casimir invariants under an enforceable discrete condition. Numerical experiments cover the Allen--Cahn equation, an Ohta--Kawasaki-type nonlocal gradient flow, a double-well Hamiltonian oscillator, and a Poisson system with a nonlinear cubic Casimir.
}
\end{abstract}

\begin{keywords}
structure-preserving integration, discrete gradient methods, scalar auxiliary variable, dissipative systems, Hamiltonian systems, Poisson systems, energy conservation
\end{keywords}

\begin{MSCcodes}
65P10, 65L05, 37M15, 70H05
\end{MSCcodes}

\section{Introduction}
\label{sec:introduction}

\subsection{Motivation: Two paradigms for discrete energy laws}
\label{sec:dg-sav-related}
{
Reliable simulations of dissipative and Hamiltonian systems require integrators that respect their geometric structure: otherwise, even accurate schemes can distort energy behavior and long-time qualitative dynamics~\cite{HairerLubichWanner2006,SanzSernaCalvo1994,LeimkuhlerReich2004}. Symplectic methods control long-time energy errors in conservative problems, while preserving the free-energy law is essential in phase-field and thin-film models, coarsening dynamics, and Wasserstein gradient flows~\cite{ElliottStuart1993,DuNicolaides1991,Eyre1998}. Two complementary paradigms address these requirements. Discrete gradient (DG) methods enforce exact discrete chain rules, usually through nonlinear solves, whereas scalar auxiliary variable (SAV) methods obtain unconditional modified-energy stability through inexpensive constant-coefficient linear solves. We combine them by applying a DG chain rule to the SAV-quadratized extended energy.
}

\paragraph{Continuous energy laws}
The systems we treat have either dissipative or conservative energy laws:
\begin{align}
\dot{x}&=-M(x)\nabla E(x),\qquad M(x)\succeq 0,\label{eq:gradflow-cont}\\
\dot{x}&=J(x)\nabla H(x),\qquad J(x)^\top=-J(x).\label{eq:poisson-cont}
\end{align}
The chain rule
\begin{equation}\label{eq:chainrule-cont}
\frac{\mathrm d}{\mathrm dt}E(x(t))=\langle \nabla E(x(t)),\,\dot{x}(t)\rangle= \langle \nabla E, S\,\nabla E\rangle
\end{equation}
reduces the energy rate to a quadratic form $\langle \nabla E, S\,\nabla E\rangle$ in the structure matrix~$S$: $S=-M\preceq 0$ gives dissipation, $S=J=-J^\top$ gives exact conservation. A structure-preserving integrator must enforce the same identity at the discrete level.

\paragraph{Discrete gradient (DG) methods}
{
DG methods replace $\nabla E$ by a two-point map satisfying an exact discrete chain rule, the finite-difference counterpart of~\eqref{eq:chainrule-cont} formalized in \cref{def:discrete-gradient}~\cite{Gonzalez1996,McLachlanQuispelRobidoux1999,QuispelMcLaren2008}. Combined with a two-point approximation of $M$ or $J$ that retains the relevant structural property, the same quadratic-form argument gives $E(x^{n+1})\le E(x^n)$ or $H(x^{n+1})=H(x^n)$; see \cref{lem:energy-law}. Exact treatment of the full energy, however, generally couples the discrete gradient to the unknown $x^{n+1}$ and may require an expensive nonlinear solve~\cite{Gonzalez1996,McLachlanQuispelRobidoux1999}.
}

\paragraph{Scalar Auxiliary Variable (SAV) methods}
{
The SAV methodology prioritizes computational efficiency~\cite{ShenXuYang2018,Shen2018,ShenXuYang2019}. In its classical gradient-flow form, one writes
}
\[
  E=E_{\mathrm{lin}}+E_{\mathrm{nl}},
\]
{
where $E_{\mathrm{lin}}$ is typically quadratic and admits a simple constant-coefficient implicit treatment, while $E_{\mathrm{nl}}$ contains the nonlinear remainder. Introducing $r^2=E_{\mathrm{nl}}(x)+C_0$ and evaluating the nonlinear SAV coefficients explicitly reduces each step to a constant-coefficient linear solve; the standard SAV/CN and SAV/BDF2 formulas are recalled in Appendix~\ref{sec:supp-sav-discrete}. These schemes dissipate a modified energy unconditionally, although that quantity need not agree with the original energy away from the constraint manifold~\cite{ShenXuYang2018,Shen2018}. Conservative extensions require additional care because frozen coefficients must retain the skew-symmetry cancellation $\langle\nabla H,J\nabla H\rangle=0$; see \cref{rem:sav-beyond-gradient-flows}.
}

\paragraph{A unifying viewpoint: SAV as quadratization, DG--SAV as DG on the quadratized system}
{
The DG--SAV construction starts from a decomposition of the energy,
}
\[
  E=\Edg+\Esav,
\]
{
where $\Edg$ is the component treated by a discrete gradient and
$\Esav$ is the component handled through the scalar auxiliary variable.
SAV quadratization then lifts the dynamics from $x$ to $(x,r)$ by setting
}
\[
  r^2=\Esav(x)+C_0,
\]
{
so that the corresponding modified energy is
}
\[
  \widehat E(x,r)=\Edg(x)+r^2.
\]
{
The auxiliary contribution is quadratic and satisfies the elementary discrete chain rule, with $\bar r:=\tfrac12(r^{n+1}+r^n)$,
}
\[
  (r^{n+1})^2-(r^n)^2=2\bar r\,(r^{n+1}-r^n).
\]
{
Hence the natural extended discrete gradient is
}
\[
  \bigl(\widetilde{\nabla}\Edg(x^{n+1},x^n),\, r^{n+1}+r^n\bigr).
\]
{
Eliminating the $r$-increment yields a generalized gradient $\eta$ that combines $\widetilde{\nabla}\Edg$ with the SAV contribution. Conservation or dissipation of $\widehat E$ is therefore the usual discrete-gradient algebra on the quadratized extended system, even when the SAV coefficients are frozen at explicitly computable states. This extended-system interpretation is shared by Lu, Wang, and Sun~\cite{LuWangSun2025}, who recover EQ, SAV, and QAV schemes from a dimension-extended linear-gradient formulation; \cref{sec:continuous-tensor-form} identifies the corresponding structure tensor precisely.

The split $E=\Edg+\Esav$ is the central design choice. Terms retained in $\Edg$ remain tied directly to the original energy but may increase the cost of the implicit solve; terms assigned to $\Esav$ are cheaper to treat, but only the modified energy $\widehat E=\Edg+r^2$ is preserved exactly away from the constraint manifold. Thus DG--SAV interpolates between pure DG and SAV-type methods. In gradient flows, $\Edg$ may be nonlinear rather than restricted to the classical quadratic $E_{\mathrm{lin}}$. The structural identification is formalized in \cref{thm:modified-energy-evolution} and then extended to dissipative, Hamiltonian, and Poisson systems.
}

\paragraph{Related work}
{
Discrete gradient methods originate with Gonzalez~\cite{Gonzalez1996} and the systematic development of McLachlan, Quispel, and Robidoux~\cite{McLachlanQuispelRobidoux1999}, building on the coordinate-increment construction of Itoh and Abe~\cite{ItohAbe1988}. The Average Vector Field (AVF) method~\cite{QuispelMcLaren2008} is the canonical symmetric discrete gradient and the minimal-stage energy-preserving Runge--Kutta method for polynomial Hamiltonians~\cite{CelledoniOwrenSun2014}. Related time finite-element and Petrov--Galerkin constructions were developed by Betsch and Steinmann~\cite{BetschSteinmann2000JCP,BetschSteinmann2000NBody}. Subsequent developments include discrete variational derivatives for PDEs~\cite{FurihataMatsuo2010}, systematic AVF discretizations of Hamiltonian and gradient PDEs~\cite{CelledoniGrimmMcLachlan2012}, methods on Riemannian manifolds~\cite{CelledoniEidnesOwrenRingholm2020}, preservation of multiple first integrals~\cite{DahlbyOwrenYaguchi2011,NortonMcLarenQuispelSternZanna2013}, dissipative and optimization dynamics~\cite{RiisEhrhardtQuispelSchonlieb2022,EhrhardtRiisRingholmSchonlieb2024}, B-series and order theory~\cite{CelledoniMcLachlanOwrenQuispel2010,Eidnes2022}, and linear energy-preserving integrators for Poisson systems~\cite{CohenHairer2011}. Relaxation and projection methods provide a complementary high-order route by correcting a standard step through a scalar or low-dimensional condition; applications include Hamiltonian systems and dispersive PDEs with simultaneous conservation of mass, momentum, and energy~\cite{RanochaKetcheson2020Hamiltonian,RanochaMitsotakisKetcheson2021,RanochaKetcheson2025NLS,RanochaKetcheson2025BBMKdVNLS}.

The SAV approach was introduced for gradient flows by Shen, Xu, and Yang~\cite{ShenXuYang2018}; see~\cite{Shen2018} for analysis and~\cite{ShenXuYang2019} for a survey. It refines invariant energy quadratization~\cite{Yang2017IEQ}, and extensions to conservative systems include~\cite{KemmochiSato2021}. Related bridges between DG and SAV include SAV algorithms for discrete-gradient optimization systems~\cite{LiuShenZhang2023} and the explicit modified-energy-conserving methods of Bilbao, Ducceschi, and Zama for separable Hamiltonians~\cite{Bilbao2023,ZamaDucceschiBilbao2023}. Closest to our structural viewpoint is Lu, Wang, and Sun~\cite{LuWangSun2025}: their dimension-extended linear-gradient formulation inherits energy laws, Poisson structure, and Casimirs, recovers EQ, SAV, and QAV schemes, and yields fully implicit extended discrete-gradient methods that preserve the auxiliary constraint and hence the original energy. Our construction uses the same extended-system interpretation but freezes SAV coefficients at explicitly computable reference states. It thereby retains semi-implicit, or in the separable Hamiltonian case fully explicit, updates while preserving a modified energy for an arbitrary split; in the Poisson setting it targets selected Casimirs of the original tensor rather than the constraint Casimir of the extension. The UEPI framework~\cite{CelledoniOwrenXuShenYaguchi2025UEPI} instead learns problem-adapted energy-behavior-preserving integrators from data; here we give a closed analytic construction in which the split controls the cost--structure trade-off.
}

\subsection{Main Contributions}

The principal contributions are:
{
\begin{itemize}
  \item An extended-system identity showing that DG--SAV is the discrete-gradient chain rule for $\Ehat=\Edg+r^2$, rewritten through $\eta$ after the $r$-increment is eliminated.
 \item A tunable split between pure DG and SAV-type quadratization; for separable Hamiltonian systems, the framework yields an explicit family of modified-energy-conserving methods similar to those of Bilbao, Ducceschi, and Zama~\cite{Bilbao2023,ZamaDucceschiBilbao2023}.
  \item Three variants---first-order semi-implicit Forward Euler, second-order self-adjoint Midpoint, and second-order Predictive---with exact modified-energy laws and local well-posedness and accuracy results.
  \item A Poisson extension that preserves the modified energy and selected Casimirs through a projected discrete tensor condition; see \cref{sec:poisson}.
  \item Numerical evidence that, in the nonlocal $p=6$ benchmark, Predictive and Midpoint DG--SAV attain Full DG accuracy at lower cost---roughly a factor of two for Predictive and $1.2$--$1.5$ for Midpoint over the resolved work--precision range---and, for the respective energy splits used in this benchmark, produce absolute SAV gaps six to seven orders of magnitude smaller than those of the classical SAV schemes.
\end{itemize}
}

\subsection{Paper Organization}

{
\Cref{sec:preliminaries} fixes notation. \Cref{sec:unified-principle,sec:variants,sec:poisson,sec:separable} develop the DG--SAV principle, its three variants, the Poisson extension, and the separable Hamiltonian case. \Cref{sec:numerics} presents the numerical experiments. Additional details are provided in the appendices.
}

\section{Preliminaries and Notation}
\label{sec:preliminaries}

\subsection{Notation}

We work in finite-dimensional Euclidean space $\R^d$ equipped with the standard inner product $\ip{u}{v} = \sum_{i=1}^d u_i v_i$ and norm $\|u\| = \sqrt{\ip{u}{u}}$. For PDE problems, $\R^d$ represents the space of degrees of freedom after spatial discretization (e.g., Fourier modes or finite-element coefficients), and $\ip{\cdot}{\cdot}$ is the corresponding discrete $L^2$ inner product.

Given a smooth scalar function $F:\R^d \to \R$, we write $\nabla F(x) \in \R^d$ for its gradient, defined by $\ip{\nabla F(x)}{v} = \mathrm{d}F(x)[v]$ for all $v \in \R^d$.

Time is discretized uniformly with step size $h > 0$, giving $t_n = nh$ and $x^n \approx x(t_n)$. Midpoint averages are denoted $\bar{x}=\tfrac{1}{2}(x^{n+1}+x^n)$ and $\bar{r}=\tfrac{1}{2}(r^{n+1}+r^n)$. Discrete gradients (two-point maps approximating the continuous gradient; see Section~\ref{sec:discrete-gradients}) are written $\widetilde{\nabla}E(x^{n+1}, x^n)$.

The energy functional is generically denoted $E$ for dissipative systems and $H$ for Hamiltonian/Poisson systems. The state variable is $x$ in the dissipative setting and $z$ (or $(q,p)$ for separable Hamiltonians) in the conservative setting. When the DG--SAV splitting is in effect, we write $E = \Edg + \Esav$ (respectively $H = \Hdg + \Hsav$), and the scalar auxiliary variable is always denoted~$r$.

A one-step method $x^{n+1} = \Phi_h(x^n)$ is said to be of \emph{order~$p$} if, for every sufficiently smooth solution $x(t)$ of the ODE, the local truncation error satisfies $\|x(t+h) - \Phi_h(x(t))\| = O(h^{p+1})$ as $h \to 0$. Equivalently, the global error over a fixed time interval satisfies $\|x^n - x(t_n)\| = O(h^p)$.

\subsection{Scalar Auxiliary Variable (SAV) Method}

The Scalar Auxiliary Variable (SAV) method was introduced for gradient flows~\cite{ShenXuYang2018} but extends beyond this setting. To fix the notation used below, consider the gradient system
\begin{equation}\label{eq:gradient-system}
  \dot{x} = -\grad E(x),
\end{equation}
where $E:\R^d\to\R$ is an energy functional. This is the $M\equiv I$ case of~\eqref{eq:gradflow-cont}, used here for consistency with the original SAV literature~\cite{ShenXuYang2018,ShenXuYang2019}; the general $M(x)$-dependent case is treated from Section~\ref{sec:unified-principle} onward.

\paragraph{Energy splitting}
The SAV approach is built on a decomposition of the energy into a quadratic part (whose gradient is linear) and a nonlinear part:
\begin{equation}\label{eq:sav-lin-nl-split}
  E(x) = E_{\mathrm{lin}}(x) + E_{\mathrm{nl}}(x),
\end{equation}
where $E_{\mathrm{lin}}$ admits a simple implicit treatment (typically $E_{\mathrm{lin}}(x)=\frac12 x^\top A x$ with constant symmetric positive-semidefinite $A$) and $E_{\mathrm{nl}}$ contains the nonlinear terms.

A scalar auxiliary variable $r(t)$ is introduced to track only the nonlinear part:
\[
  r^2 = E_{\mathrm{nl}}(x) + C_0,
\]
where $C_0$ is chosen so that $E_{\mathrm{nl}}(x)+C_0 \ge \delta > 0$ on the relevant solution region. Differentiating along solutions yields $2r\,\dot{r} = \ip{\grad E_{\mathrm{nl}}}{\dot{x}}$. The gradient system is reformulated as:
\begin{equation}
  \label{eq:sav-continuous-prelim}
  \begin{aligned}
    \dot{x} &= -\grad E_{\mathrm{lin}}(x) - \frac{r}{\sqrt{E_{\mathrm{nl}}(x)+C_0}}\,\grad E_{\mathrm{nl}}(x), \\[4pt]
    \dot{r} &= \frac{\ip{\grad E_{\mathrm{nl}}(x)}{\dot{x}}}{2\sqrt{E_{\mathrm{nl}}(x)+C_0}}.
  \end{aligned}
\end{equation}
On the constraint manifold $r = \sqrt{E_{\mathrm{nl}}(x)+C_0}$, this reduces to the original gradient flow.

\paragraph{SAV discretization and modified energy dissipation}
{
The two standard second-order choices are SAV/CN (Crank--Nicolson) and SAV/BDF2~\cite{ShenXuYang2019}. Both treat $E_{\mathrm{lin}}$ implicitly and evaluate the nonlinear coefficient at an explicit $O(h^2)$ extrapolant, producing a constant-coefficient linear system and unconditional stability for their respective modified energies; see~\cite[Theorems~2.1--2.2]{ShenXuYang2019}. Their formulas are given in Appendix~\ref{sec:supp-sav-discrete}.
}


\begin{remark}[SAV beyond gradient flows]\label{rem:sav-beyond-gradient-flows}
{
SAV was developed for dissipative gradient flows, where the objective is unconditional modified-energy stability. Conservative extensions must retain the skew-symmetry cancellation after introducing the auxiliary variable. For separable Hamiltonians, Bilbao, Ducceschi, and Zama~\cite{Bilbao2023} obtained explicit schemes that conserve a modified energy exactly; see also~\cite{ZamaDucceschiBilbao2023} on the shift constant. Kemmochi and Sato~\cite{KemmochiSato2021} treated conservative systems with unbounded energies. Lu, Wang, and Sun~\cite{LuWangSun2025} discretized a dimension-extended reformulation to obtain fully implicit discrete-gradient schemes that conserve the original, possibly nonpolynomial, Hamiltonian. DG--SAV treats the dissipative and conservative settings within one framework; see \cref{sec:variants}.
}
\end{remark}

\subsection{Discrete Gradients}
\label{sec:discrete-gradients}

\begin{definition}[Discrete Gradient]\label{def:discrete-gradient}
Let $E:\R^d \to \R$ be continuously differentiable. A \emph{discrete gradient} of $E$ is a map
\[
  \widetilde{\nabla} E : \R^d \times \R^d \to \R^d, \qquad (x^+,x^-)\mapsto \widetilde{\nabla} E(x^+,x^-),
\]
satisfying:
\begin{enumerate}[label=(\roman*)]
  \item \textbf{Discrete chain rule:}
    \begin{equation}\label{eq:dg-identity}
      E(x^+) - E(x^-) = \ip{\widetilde{\nabla} E(x^+,x^-)}{x^+ - x^-};
    \end{equation}
  \item \textbf{Consistency:}
    \begin{equation}\label{eq:dg-consistency}
      \widetilde{\nabla} E(x,x) = \grad E(x).
    \end{equation}
\end{enumerate}
\end{definition}

The two discrete gradients used in this paper are the symmetric Average Vector Field (AVF) gradient~\cite{QuispelMcLaren2008}
\begin{equation}\label{eq:avf-dg}
  \widetilde{\nabla}_{\mathrm{AVF}} E(x^+,x^-)
  = \int_0^1 \nabla E\big((1-\xi)x^- + \xi x^+\big)\,d\xi
\end{equation}
and the Gonzalez gradient~\cite{Gonzalez1996}, $\widetilde{\nabla}_{\mathrm{G}} E(x^+,x^-) = \nabla E(\bar x) + \bigl[E(x^+) - E(x^-) - \langle\nabla E(\bar x), x^+ - x^-\rangle\bigr](x^+ - x^-)/\|x^+ - x^-\|^2$, with $\bar x = \tfrac12(x^++x^-)$. Both are symmetric in their arguments, ensuring time-reversibility when used in self-adjoint schemes; the coordinate-increment construction of Itoh--Abe~\cite{ItohAbe1988} is an alternative not used here.

\paragraph{Energy law template}
The immediate payoff of the discrete chain rule is that energy conservation or dissipation reduces to a one-line algebraic argument---the discrete analog of the continuous mechanism described in Section~\ref{sec:dg-sav-related}.

\begin{lemma}[Energy Law Template]\label{lem:energy-law}
Let $S \in \R^{d\times d}$ be a structure matrix and let $\eta \in \R^d$. Consider a one-step scheme of the form
\[
  \frac{x^{n+1} - x^n}{h} = S\,\eta.
\]
Then the energy change is governed by
\[
  E(x^{n+1}) - E(x^n)
  = \ip{\widetilde{\nabla} E(x^{n+1},x^n)}{x^{n+1} - x^n}
  = h\,\ip{\widetilde{\nabla} E(x^{n+1},x^n)}{S\,\eta}.
\]
Two fundamental cases arise:
\begin{enumerate}[label=(\roman*)]
  \item \textbf{Conservation (skew-symmetric $S$):} If $S^\top = -S$ and $\eta = \widetilde{\nabla} E(x^{n+1},x^n)$, then
  \[
    E(x^{n+1}) - E(x^n)
    = h\,\ip{\widetilde{\nabla} E(x^{n+1},x^n)}{S\,\widetilde{\nabla} E(x^{n+1},x^n)}
    = 0.
  \]
  \item \textbf{Dissipation (negative semidefinite $S$):} If $S = -M$ with $M$  positive semidefinite and $\eta = \widetilde{\nabla} E(x^{n+1},x^n)$, then
  \[
    E(x^{n+1}) - E(x^n)
    = -h\,\ip{\widetilde{\nabla} E(x^{n+1},x^n)}{M\,\widetilde{\nabla} E(x^{n+1},x^n)}
    \le 0.
  \]
\end{enumerate}
\end{lemma}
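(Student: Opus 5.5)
The plan is to substitute the scheme into the discrete chain rule of \cref{def:discrete-gradient} and then read off the sign of a quadratic form. First, apply \eqref{eq:dg-identity} with $x^+=x^{n+1}$ and $x^-=x^n$. This gives $E(x^{n+1})-E(x^n)=\ip{\widetilde{\nabla} E(x^{n+1},x^n)}{x^{n+1}-x^n}$ with no further assumption, since the chain rule holds for arbitrary pairs of points. Second, the scheme gives $x^{n+1}-x^n=h\,S\eta$. Substituting this and using bilinearity of the inner product yields the general identity $h\,\ip{\widetilde{\nabla} E}{S\eta}$. Note that this step uses only the scheme relation itself, not the solvability of the scheme, so we may take $(x^{n+1},x^n)$ to be any pair satisfying it.

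For case (i), set $g:=\widetilde{\nabla} E(x^{n+1},x^n)$ and take $\eta=g$. Skew-symmetry gives $\ip{g}{Sg}=\ip{S^\top g}{g}=-\ip{Sg}{g}=-\ip{g}{Sg}$, because the Euclidean inner product is symmetric. Hence $\ip{g}{Sg}=0$, and multiplying by $h$ gives exact conservation.

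For case (ii), take $S=-M$ and $\eta=g$. The identity becomes $-h\,\ip{g}{Mg}$. Positive semidefiniteness gives $\ip{g}{Mg}\ge 0$, and since $h>0$ the energy change is nonpositive.

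There is no substantive obstacle here. The one point worth stating explicitly is that $S$ (or $M$) may depend on $(x^n,x^{n+1})$, as in the two-point approximations mentioned in the introduction. The argument goes through unchanged in that case, because it uses only the algebraic property of $S$ at the particular pair of points in the step, not any regularity in the state. In the PDE setting the same reasoning applies with $\ip{\cdot}{\cdot}$ taken to be the discrete $L^2$ inner product, provided transposes and semidefiniteness are understood with respect to that inner product.
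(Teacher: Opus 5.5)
Your proposal is correct and matches the paper's proof: apply the discrete chain rule \eqref{eq:dg-identity}, substitute the scheme, and then use $\ip{v}{Sv}=0$ for skew-symmetric $S$ and $\ip{v}{Mv}\ge 0$ for $M\succeq 0$. Your remarks on state-dependent two-point $S$ and on the discrete $L^2$ inner product are accurate but not needed for the argument.
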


\begin{proof}
The first identity is the discrete chain rule followed by substitution of the scheme; the conservation and dissipation statements then follow from $\ip{v}{Sv}=0$ for skew-symmetric $S$ and $\ip{v}{Mv}\ge0$ for $M\succeq0$.
\end{proof}

This template forms the basis for all energy-preserving and energy-dissipating schemes developed in this paper. In the DG--SAV framework of Section~\ref{sec:unified-principle}, the vector $\eta$ will combine a discrete gradient of $\Edg$ with SAV-mediated terms from $\Esav$, and the same algebraic argument will yield exact conservation or dissipation of a \emph{modified} energy.

\paragraph{Midpoint consistency}
The midpoint DG--SAV schemes developed in Sections~\ref{sec:dissipative}--\ref{sec:poisson} require the following approximation property, which holds for symmetric discrete gradients.

\begin{lemma}[Midpoint Consistency of Symmetric Discrete Gradients]\label{lem:symmDG-midpoint}
Let $E \in C^3(\R^d)$ and let $\widetilde{\nabla} E$ be a \emph{symmetric} discrete gradient, i.e., $\widetilde{\nabla} E(x^+,x^-) = \widetilde{\nabla} E(x^-,x^+)$. Assume furthermore that the map $(x^+,x^-) \mapsto \widetilde{\nabla} E(x^+,x^-)$ is $C^2$ in a neighborhood of the diagonal $\{(x,x) : x \in \R^d\}$. Then for $x^+, x^-$ sufficiently close and $\bar{x} = \frac{1}{2}(x^+ + x^-)$,
\begin{equation}\label{eq:symmDG-midpoint}
  \widetilde{\nabla} E(x^+,x^-) = \nabla E(\bar{x}) + O(\|x^+ - x^-\|^2).
\end{equation}
\end{lemma}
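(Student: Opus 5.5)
Change variables to midpoint and half-difference: write $x^\pm=\bar x\pm\tfrac12\delta$ with $\delta=x^+-x^-$, and set $G(\bar x,\delta):=\widetilde{\nabla}E(\bar x+\tfrac12\delta,\bar x-\tfrac12\delta)$. This is a linear change of coordinates, so $G$ is $C^2$ near $\delta=0$. By symmetry of the discrete gradient, $G(\bar x,\delta)=G(\bar x,-\delta)$, so $G$ is even in $\delta$. By consistency~\eqref{eq:dg-consistency}, $G(\bar x,0)=\nabla E(\bar x)$.

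Now Taylor-expand $G$ in $\delta$ at fixed $\bar x$ to second order with integral (or mean-value) remainder:
\[
G(\bar x,\delta)=G(\bar x,0)+\partial_\delta G(\bar x,0)\,\delta+R(\bar x,\delta),\qquad \|R(\bar x,\delta)\|\le C\|\delta\|^2 .
\]
Here $C$ bounds $\partial_\delta^2 G$ on a compact neighborhood of the diagonal, which is valid by the $C^2$ assumption.

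The linear term vanishes. Differentiating the identity $G(\bar x,\delta)=G(\bar x,-\delta)$ at $\delta=0$ gives $\partial_\delta G(\bar x,0)=-\partial_\delta G(\bar x,0)$, so $\partial_\delta G(\bar x,0)=0$. Combining this with the consistency value yields $\widetilde{\nabla}E(x^+,x^-)=\nabla E(\bar x)+O(\|x^+-x^-\|^2)$, which is~\eqref{eq:symmDG-midpoint}.

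The discrete chain rule is not even needed. The $C^3$ hypothesis on $E$ only serves to make the $C^2$ assumption plausible for the AVF and Gonzalez gradients.

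The only delicate point is uniformity of the constant. The statement is local, so I would fix a compact set $K$ of midpoints and a radius $\rho$ such that the segment $\{(\bar x,s\delta):s\in[0,1]\}$ lies inside the $C^2$ neighborhood for all $\bar x\in K$ and $\|\delta\|\le\rho$. I would take $C=\tfrac12\sup\|\partial_\delta^2G\|$ over that set, which is finite by continuity on a compact set.
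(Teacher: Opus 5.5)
Your proof is correct and follows the paper's argument essentially verbatim: rewrite $x^\pm=\bar x\pm\tfrac12\delta$, get the zeroth-order term $\nabla E(\bar x)$ from consistency, and kill the linear term by evenness of the reparametrized map in $\delta$. Your extra step of restricting to a compact set of midpoints, so that the constant in the $O(\|\delta\|^2)$ remainder is uniform, is a sound refinement that the paper leaves implicit.
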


\begin{proof}
Write $x^\pm = \bar{x} \pm \frac{1}{2}\delta$ where $\delta = x^+ - x^-$. Define $\phi(\delta) := \widetilde{\nabla} E(\bar{x} + \frac{1}{2}\delta, \bar{x} - \frac{1}{2}\delta)$. By the $C^2$ assumption, $\phi$ admits a Taylor expansion about $\delta = 0$:
\[
  \phi(\delta) = \phi(0) + D\phi(0)\,\delta + O(\|\delta\|^2).
\]
At $\delta = 0$, consistency of the discrete gradient gives $\phi(0) = \widetilde{\nabla} E(\bar{x}, \bar{x}) = \nabla E(\bar{x})$. By symmetry, $\widetilde{\nabla} E(x^+,x^-) = \widetilde{\nabla} E(x^-,x^+)$, so $\phi(\delta) = \phi(-\delta)$ for all $\delta$, i.e., $\phi$ is an even function. Differentiating at $\delta = 0$: $D\phi(0) = -D\phi(0)$, hence $D\phi(0) = 0$. Therefore
\[
  \widetilde{\nabla} E(x^+,x^-) = \phi(\delta) = \nabla E(\bar{x}) + O(\|\delta\|^2). \qedhere
\]
\end{proof}

\section{Unified DG--SAV Principle}
\label{sec:unified-principle}

This section builds the unified DG--SAV template on which the dissipative, Hamiltonian, and Poisson constructions of Sections~\ref{sec:dissipative}--\ref{sec:poisson} all rest; its central structural identification is stated in \cref{thm:modified-energy-evolution}.

\subsection{Extended System and Modified Energy}

We apply the construction to the dissipative and conservative systems introduced in~\eqref{eq:gradflow-cont}--\eqref{eq:poisson-cont}, written uniformly as $\dot x=S(x)\nabla E(x)$ with $S=-M\preceq0$ in the dissipative case and, after replacing $E$ by $H$, $S=J=-J^\top$ in the Hamiltonian/Poisson case.
The starting point is to decompose the system energy as
\begin{equation}\label{eq:energy-split}
  E = \Edg + \Esav,
\end{equation}
where $\Edg$ is the part for which a discrete gradient can be evaluated efficiently (e.g., quadratic), and $\Esav$ is the computationally expensive part to be handled via an auxiliary scalar.
This decomposition applies equally to dissipative systems $\dot{x} = -M(x)\,\nabla E(x)$ ($M \succeq 0$) and Hamiltonian systems $\dot{x} = J(x)\,\nabla H(x)$ ($J^\top = -J$), with $E$ replaced by the Hamiltonian~$H$ in the latter case.

{Following the SAV methodology (Section~\ref{sec:preliminaries}), we fix a constant $C_0$ and a lower bound $\delta>0$ such that $\Esav(x)+C_0 \ge \delta$ in the region of interest, and introduce the scalar auxiliary variable}
\begin{equation}\label{eq:r-constraint}
  r = \sqrt{\Esav(x) + C_0},
\end{equation}
which defines a constraint manifold in the extended $(x,r)$ space. On this manifold, the \emph{modified energy}
\begin{equation}\label{eq:modified-energy}\tag{ME}
  \Ehat(x,r) := \Edg(x) + r^2
\end{equation}
satisfies $\Ehat(x,r) = E(x) + C_0$, so it differs from the original energy only by the constant~$C_0$. With the shorthand $\Sigma(x) := \sqrt{\Esav(x)+C_0}$ and $G(x) := \grad\Esav(x)$, the dynamics $\dot x = S(x)\nabla E(x)$ (with $S(x) = -M(x)$, $M\succeq 0$, in the dissipative case and $S(x)^\top = -S(x)$ in the conservative case) extend on $(x,r)$ as
\begin{equation}\label{eq:DG-SAV-continuous-general}
  \dot x = S(x)\bigl(\grad\Edg(x) + (r/\Sigma(x))\,G(x)\bigr),
  \qquad
  \dot r = \frac{\langle G(x),\,\dot x\rangle}{2\Sigma(x)};
\end{equation}
on the constraint manifold $r = \Sigma(x)$ the original system $\dot x = S(x)\nabla E(x)$ is recovered.

\paragraph{Continuous tensor form}
\label{sec:continuous-tensor-form}
Set $y=(x,r)$ and $\nabla_y\Ehat(y)=(\grad\Edg(x),2r)^\top$. The aim is to write the lifted dynamics as a single structure tensor acting on $\nabla_y\Ehat$, so that dissipation or conservation is visible before discretization. In the dissipative case ($S=-M$, $M\succeq0$), introduce the \emph{augmented dissipation tensor}
\begin{equation}\label{eq:augmented-dissipation-tensor}
  \widetilde{M}(x) :=
  \begin{pmatrix}
    M(x) & \dfrac{M(x)\,G(x)}{2\,\Sigma(x)} \\[6pt]
    \dfrac{G(x)^\top M(x)}{2\,\Sigma(x)} & \dfrac{G(x)^\top M(x)\,G(x)}{4\,\Sigma(x)^2}
  \end{pmatrix},
\end{equation}
where $G(x)=\grad\Esav(x)$ and $\Sigma(x)=\sqrt{\Esav(x)+C_0}$. The factorization
\begin{equation}\label{eq:dissipation-factor}
  \widetilde{M}(x) = \begin{pmatrix} I \\[2pt] G(x)^\top/(2\Sigma(x)) \end{pmatrix} M(x) \begin{pmatrix} I & G(x)/(2\Sigma(x)) \end{pmatrix}
\end{equation}
shows $\widetilde{M}(x) \succeq 0$. In the conservative case ($S=J$, $J^\top=-J$), use instead the augmented skew tensor
\begin{equation}\label{eq:augmented-skew-tensor}
  K(x) := \begin{pmatrix} J(x) & J(x)G(x)/(2\Sigma(x)) \\[2pt] -(J(x)G(x))^\top/(2\Sigma(x)) & 0 \end{pmatrix}.
\end{equation}
This $K(x)$ is skew-symmetric, with off-diagonal blocks that are negative transposes. With these definitions, the lifted dynamics~\eqref{eq:DG-SAV-continuous-general} have the compact extended-tensor form
\begin{equation}\label{eq:extended-tensor-dynamics}
  \dot y = \mathcal{S}_{\mathrm{ext}}(x)\,\nabla_y\Ehat(y),
  \qquad
  \mathcal{S}_{\mathrm{ext}}(x)=
  \begin{cases}
    -\widetilde{M}(x), & S=-M,\\
    K(x), & S=J.
  \end{cases}
\end{equation}
The negative semidefiniteness or skew-symmetry of $\mathcal{S}_{\mathrm{ext}}$ gives the continuous modified-energy law $\tfrac{d}{dt}\Ehat = \langle \eta, S(x)\,\eta\rangle$ with $\eta := \grad\Edg + (r/\Sigma)\,G$ (in conservative notation, $\tfrac{d}{dt}\Hhat=0$). The DG--SAV discretization below is motivated by applying a discrete-gradient replacement to these extended-tensor dynamics; its energy identity is the exact stepwise counterpart of the continuous law. For the scalar quadratization $r=\Sigma(x)$, the tensors \eqref{eq:augmented-dissipation-tensor} and \eqref{eq:augmented-skew-tensor} coincide with the extended structure matrix of Lu, Wang, and Sun~\cite[Example~3.1]{LuWangSun2025}, who develop such extensions for general vector-valued dimension-raising functions and prove that they inherit dissipativity, Poisson structure, and Casimirs.

\paragraph{{Choice of the split and the SAV gap}}
{
The split $E = \Edg + \Esav$ is a design choice that balances the cost of the discrete-gradient solve against the drift from the constraint manifold, measured by the \emph{SAV gap} $|r^2-(\Esav(x)+C_0)|$. Assigning more of the energy to $\Edg$ generally reduces this gap but may require a more expensive nonlinear solve; the endpoint $\Esav=0$ is pure DG. The shift $C_0$ is chosen so that $\Esav+C_0$ remains bounded away from zero both along the trajectory and in the local neighborhood sampled by the method, ensuring that $r=\sqrt{\Esav+C_0}$ and the coefficients containing $1/\Sigma$ are well-defined. Its effect on the discrete SAV gap is analyzed in~\cite{ZamaDucceschiBilbao2023}. In the extended-system terminology of~\cite{LuWangSun2025}, $r-\sqrt{\Esav(x)+C_0}$ is the constraint Casimir. The SAV gap measures departure from its zero level set, equivalently from the constraint manifold $r=\sqrt{\Esav(x)+C_0}$. Enforcing that Casimir exactly recovers the original energy law but leads to a fully implicit solve of pure-DG type; the variants below instead accept a small nonzero gap in exchange for frozen coefficients and cheaper steps.
}

\subsection{General One-Step DG--SAV Template and Energy Law}
\label{sec:general-template}

Motivated jointly by the extended tensor dynamics~\eqref{eq:extended-tensor-dynamics} and by the discrete-gradient chain-rule mechanism of Section~\ref{sec:discrete-gradients}, the DG--SAV discretization replaces $\nabla_y\Ehat$ by an extended discrete gradient and evaluates the tensor coefficients at a chosen reference state. Writing this extended update in $(x,r)$ components and collecting the SAV contribution into the vector $\eta$ gives the following one-step template. Let $S_*$ be a structure operator (skew-symmetric for conservation, $S_* = -M_*$ with $M_* \succeq 0$ for dissipation). The general DG--SAV scheme reads:
\begin{equation}\label{eq:general-dgsav}\tag{DG-SAV}
  \begin{aligned}
    \frac{x^{n+1} - x^n}{h} &= S_*\left(\widetilde{\nabla}\Edg(x^{n+1},x^n) + \frac{\bar{r}}{\Sigma_*}\,G_*\right), \\[6pt]
    r^{n+1} - r^n &= \frac{\ip{G_*}{x^{n+1} - x^n}}{2\Sigma_*},
  \end{aligned}
\end{equation}
where $\widetilde{\nabla}\Edg(x^{n+1},x^n)$ is a discrete gradient of $\Edg$, the quantities $\Sigma_* \approx \sqrt{\Esav(x) + C_0}$ and $G_* \approx \grad\Esav(x)$ are SAV coefficients evaluated at an appropriate reference state $x_*$ (the specific choice---current, midpoint, or predicted---defines the three scheme variants in Section~\ref{sec:variants}), and $\bar{r} = \frac{1}{2}(r^{n+1} + r^n)$.

The design of the $r$-equation is not arbitrary: the factor $\ip{G_*}{\cdot}/(2\Sigma_*)$ is precisely what ensures that the SAV contribution to the energy change telescopes with the discrete gradient contribution, producing a single quadratic form $\ip{\eta}{S_*\,\eta}$ whose sign is determined by the structure matrix alone, as the next theorem makes precise.

\begin{theorem}[Modified Energy Evolution]\label{thm:modified-energy-evolution}
The scheme~\eqref{eq:general-dgsav} satisfies
\begin{equation}\label{eq:energy-evolution}
  \Ehat(x^{n+1},r^{n+1}) - \Ehat(x^n,r^n) = h\,\ip{\eta}{S_*\,\eta},
\end{equation}
where $\eta := \widetilde{\nabla}\Edg(x^{n+1},x^n) + \frac{\bar{r}}{\Sigma_*}\,G_*$.
\end{theorem}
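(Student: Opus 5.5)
The identity is a direct discrete chain-rule computation, so the plan is to split the modified-energy increment into its $\Edg$ part and its $r^2$ part and treat each separately. Write $\Delta x := x^{n+1}-x^n$. For the first part, the discrete chain rule~\eqref{eq:dg-identity} of \cref{def:discrete-gradient} gives
\[
\Edg(x^{n+1})-\Edg(x^n)=\ip{\widetilde{\nabla}\Edg(x^{n+1},x^n)}{\Delta x}.
\]
For the auxiliary part, factor the difference of squares, $(r^{n+1})^2-(r^n)^2 = 2\bar r\,(r^{n+1}-r^n)$. Then substitute the $r$-equation of~\eqref{eq:general-dgsav}:
\[
(r^{n+1})^2-(r^n)^2=\frac{\bar r}{\Sigma_*}\ip{G_*}{\Delta x}.
\]
Here $\Sigma_*\ge\delta>0$ by the choice of $C_0$, so the division is legitimate.

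Next, add the two contributions and use linearity of the inner product in its first slot. The increment of $\Ehat$ collapses to $\ip{\eta}{\Delta x}$ with exactly the $\eta$ of the statement. This is the point where the factor $1/(2\Sigma_*)$ in the $r$-update is designed to cancel the $2$ from $2\bar r$.

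Finally, substitute the $x$-equation $\Delta x = h\,S_*\eta$ to obtain $h\ip{\eta}{S_*\eta}$. This is precisely the mechanism of \cref{lem:energy-law}, applied to the extended energy $\Ehat$ on $(x,r)$ with extended discrete gradient $(\widetilde{\nabla}\Edg,\,r^{n+1}+r^n)$ and the $r$-increment eliminated.

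There is no genuine obstacle, since the argument is purely algebraic. The only point needing care is that the same frozen coefficients $\Sigma_*$ and $G_*$ appear in both the $x$- and $r$-equations; this is what makes the SAV terms combine into $\eta$ regardless of the reference state $x_*$. It is worth noting afterwards that the sign conclusions follow as in \cref{lem:energy-law}: conservation when $S_*$ is skew-symmetric, and dissipation when $S_*=-M_*$ with $M_*\succeq0$.
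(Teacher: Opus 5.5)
Your proof is correct and follows the paper's argument step for step: the discrete chain rule for $\Edg$, the difference of squares combined with the $r$-equation for the auxiliary part, summation into $\ip{\eta}{x^{n+1}-x^n}$, and then substitution of the $x$-equation. One small nit: the positivity assumption gives $\Sigma_*^2\ge\delta$, so $\Sigma_*\ge\sqrt{\delta}$ rather than $\Sigma_*\ge\delta$; the identity only needs $\Sigma_*\neq 0$, which the scheme already presupposes.
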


\begin{proof}
Decompose $\Ehat^{n+1} - \Ehat^n = [\Edg(x^{n+1}) - \Edg(x^n)] + [(r^{n+1})^2 - (r^n)^2]$. The discrete chain rule yields the first bracket as $\ip{\widetilde{\nabla}\Edg}{x^{n+1} - x^n}$, while $(r^{n+1})^2 - (r^n)^2 = 2\bar r(r^{n+1} - r^n)$ together with the second equation of~\eqref{eq:general-dgsav} yields the second bracket as $(\bar r/\Sigma_*)\ip{G_*}{x^{n+1} - x^n}$. Summing gives $\ip{\eta}{x^{n+1} - x^n}$, and substituting the first equation of~\eqref{eq:general-dgsav} gives $h\,\ip{\eta}{S_*\,\eta}$.
\end{proof}

{The energy evolution~\eqref{eq:energy-evolution} has the same quadratic-form structure as the energy law template (Lemma~\ref{lem:energy-law}), with $\eta$ playing the role of the gradient after the SAV increment has been eliminated.}

\begin{corollary}[Conservation vs.\ Dissipation]\label{cor:conservation-dissipation}
The sign of the quadratic form in~\eqref{eq:energy-evolution} determines the qualitative behavior of the modified energy:
\begin{enumerate}[label=(\roman*)]
  \item \textbf{Conservation:} If $S_*^\top = -S_*$, then $\Ehat(x^{n+1},r^{n+1}) = \Ehat(x^n,r^n)$ for all~$n$.
  \item \textbf{Dissipation:} If $S_* = -M_*$ with $M_* \succeq 0$, then $\Ehat(x^{n+1},r^{n+1}) - \Ehat(x^n,r^n) = -h\,\ip{\eta}{M_*\,\eta} \le 0$ for all~$n$.
\end{enumerate}
\end{corollary}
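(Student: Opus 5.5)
The plan is to read both statements directly off the identity \eqref{eq:energy-evolution} of \cref{thm:modified-energy-evolution}, which already reduces the one-step change of $\Ehat$ to the single quadratic form $h\,\ip{\eta}{S_*\,\eta}$. Here $\eta = \widetilde{\nabla}\Edg(x^{n+1},x^n) + (\bar r/\Sigma_*)\,G_*$ is just some vector in $\R^d$ determined by the step. No further properties of the discrete gradient or of the reference state $x_*$ enter. All that remains is to evaluate the sign of $\ip{v}{S_*v}$ for an arbitrary $v\in\R^d$ under the two structural hypotheses, exactly as in the proof of \cref{lem:energy-law}.

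For case (i), we assume $S_*^\top=-S_*$. Then for any $v$ we have $\ip{v}{S_*v}=\ip{S_*^\top v}{v}=-\ip{S_*v}{v}=-\ip{v}{S_*v}$, so $\ip{v}{S_*v}=0$. Taking $v=\eta$ in \eqref{eq:energy-evolution} gives $\Ehat(x^{n+1},r^{n+1})=\Ehat(x^n,r^n)$.

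For case (ii), we substitute $S_*=-M_*$ into \eqref{eq:energy-evolution} to obtain $\Ehat^{n+1}-\Ehat^n=-h\,\ip{\eta}{M_*\eta}$. Since $M_*\succeq0$ and $h>0$, the right-hand side is $\le 0$.

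The statement ``for all $n$'' needs only the observation that the argument applies verbatim at every step for which the scheme \eqref{eq:general-dgsav} admits a solution $(x^{n+1},r^{n+1})$ and $\Sigma_*\neq 0$. The latter is guaranteed by the choice of $C_0$ with $\Esav+C_0\ge\delta>0$, provided $\Sigma_*$ is evaluated at a reference state in the region of interest.

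There is no real obstacle here. The only point to state explicitly is that the structural hypothesis is imposed on the frozen operator $S_*$ itself, not merely on $S(x)$, since $\eta$ is tested against $S_*$. Existence and solvability of the step are taken as given and are not part of this corollary.
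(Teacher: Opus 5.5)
Your proposal is correct and matches the paper's proof. Both cases are read off the identity $\Ehat(x^{n+1},r^{n+1})-\Ehat(x^n,r^n)=h\,\ip{\eta}{S_*\,\eta}$ of Theorem~\ref{thm:modified-energy-evolution}, using the quadratic-form argument of Lemma~\ref{lem:energy-law}: the form vanishes for skew-symmetric $S_*$, and equals $-h\,\ip{\eta}{M_*\,\eta}\le 0$ when $M_*\succeq 0$.
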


\begin{proof}
Both cases follow from Theorem~\ref{thm:modified-energy-evolution} and the same quadratic-form argument as in Lemma~\ref{lem:energy-law}: $\ip{\eta}{S_*\,\eta} = 0$ when $S_*$ is skew-symmetric, and $\ip{\eta}{(-M_*)\,\eta} \le 0$ when $M_* \succeq 0$.
\end{proof}

\section{DG--SAV Time Integrators: Variants and Analysis}
\label{sec:variants}
\label{sec:dissipative}
\label{sec:hamiltonian}

Scheme~\eqref{eq:general-dgsav} is parameterized by a structure matrix $S_*$ and an evaluation point $x_*$ for the SAV coefficients $(\Sigma_*, G_*)$. This section instantiates it to obtain three concrete variants---Forward Euler, Midpoint, and Predictive---and analyzes their energy laws, accuracy, and well-posedness. The dissipative case ($S(x) = -M(x)$, $M\succeq 0$) and the conservative/Hamiltonian case ($S(x) = J(x)$, $J^\top = -J$) are treated together with $E = H$ in the latter; the splitting $E = \Edg + \Esav$ and the modified energy $\Ehat(x,r) = \Edg(x) + r^2$ are common to both.


\subsection{The Three Variants}
\label{sec:three-variants}

{
All three variants have the form~\eqref{eq:general-dgsav}. They differ only in the reference state $x_*$ used for the SAV coefficients and, when $S$ is state dependent, for $S_*$.
}

\begin{definition}[The three DG--SAV variants]\label{def:three-variants}
The three variants of the unified template~\eqref{eq:general-dgsav} are defined by:
\begin{enumerate}[label=(\roman*)]
  \item {\textbf{Forward Euler} (\textit{first order}): set $x_*=x^n$. Then $S_*=S(x^n)$, $\Sigma_*=\sqrt{\Esav(x^n)+C_0}$, and $G_*=\nabla\Esav(x^n)$.}
  \item {\textbf{Midpoint} (\textit{second order, self-adjoint}): set $x_*=\bar x:=\tfrac12(x^{n+1}+x^n)$. Then $S_*=S(\bar x)$, $\Sigma_*=\sqrt{\Esav(\bar x)+C_0}$, and $G_*=\nabla\Esav(\bar x)$.}
  \item {\textbf{Predictive} (\textit{second order, reduced cost}): set $x_*=x_{n+1/2}^{\mathrm{pred}}$, where the predictor is one explicit half-step from $(x^n,r^n)$:}
        \begin{equation}\label{eq:predictive-dgsav-pred}
          x_{n+1/2}^{\mathrm{pred}} = x^n + \tfrac{h}{2}\,S(x^n)\!\left(\grad\Edg(x^n) + \tfrac{r^n}{\sqrt{\Esav(x^n)+C_0}}\,\grad\Esav(x^n)\right),
        \end{equation}
        and $S_* = S(x_{n+1/2}^{\mathrm{pred}})$, $\Sigma_* = \sqrt{\Esav(x_{n+1/2}^{\mathrm{pred}})+C_0}$, $G_* = \grad\Esav(x_{n+1/2}^{\mathrm{pred}})$.
\end{enumerate}
With the chosen $(S_*, \Sigma_*, G_*)$, the variant scheme is
\begin{equation}\label{eq:euler-dgsav}\tag{DG--SAV variant}
  \begin{aligned}
    \frac{x^{n+1}-x^n}{h} &= S_*\!\left(\widetilde{\nabla}\Edg(x^{n+1},x^n) + \frac{\bar r}{\Sigma_*}\,G_*\right), \\[4pt]
    r^{n+1}-r^n &= \frac{\ip{G_*}{x^{n+1}-x^n}}{2\Sigma_*}, \qquad \bar r := \tfrac{1}{2}(r^{n+1}+r^n).
  \end{aligned}
\end{equation}
\end{definition}

The Forward Euler variant is mildly implicit (only the discrete gradient $\widetilde{\nabla}\Edg$ couples to $x^{n+1}$); the Midpoint variant is fully implicit; the Predictive variant restores mild implicitness while retaining second-order accuracy.

\subsection{Energy laws and accuracy}
\label{sec:variants-energy-accuracy}

\begin{proposition}[Modified energy law for the three variants]\label{prop:variants-energy-law}
For each variant in Definition~\ref{def:three-variants}, the modified energy satisfies
\[
  \Ehat(x^{n+1},r^{n+1}) - \Ehat(x^n,r^n) = h\,\ip{\eta}{S_*\,\eta}, \qquad \eta := \widetilde{\nabla}\Edg(x^{n+1},x^n) + \frac{\bar r}{\Sigma_*}\,G_*.
\]
{By Corollary~\ref{cor:conservation-dissipation}, $S(x) = -M(x)$ with $M\succeq 0$ yields $\Ehat^{n+1} - \Ehat^n = -h\ip{\eta}{M_*\,\eta} \le 0$ (dissipation), while $S(x)^\top = -S(x)$ yields $\Ehat^{n+1} = \Ehat^n$ (conservation). The identity holds without a time-step restriction for every solvable step.}
\end{proposition}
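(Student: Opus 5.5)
The plan is to reduce the proposition to \cref{thm:modified-energy-evolution} and \cref{cor:conservation-dissipation}. The key observation is that the proof of \cref{thm:modified-energy-evolution} never uses how $(S_*,\Sigma_*,G_*)$ are produced. It uses only three facts about a pair $(x^{n+1},r^{n+1})$ satisfying \eqref{eq:euler-dgsav}. The first is the discrete chain rule \eqref{eq:dg-identity} for $\widetilde{\nabla}\Edg$. The second is the algebraic identity $(r^{n+1})^2-(r^n)^2=2\bar r(r^{n+1}-r^n)$. The third is that the same triple $(S_*,\Sigma_*,G_*)$ appears in both the $x$- and $r$-equations.

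First I fix a variant from \cref{def:three-variants} and suppose $(x^{n+1},r^{n+1})$ is a solution of the step. For the Forward Euler variant the triple is a fixed quantity depending only on $x^n$. For the Predictive variant it depends only on $(x^n,r^n)$ through \eqref{eq:predictive-dgsav-pred}. For the Midpoint variant it depends on the unknown $x^{n+1}$ through $\bar x$. In all three cases, once a solution is in hand, I freeze the numerical values $S_*,\Sigma_*,G_*$ attached to it. Then $(x^{n+1},r^{n+1})$ satisfies \eqref{eq:general-dgsav} with these constants.

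Next I rerun the computation of \cref{thm:modified-energy-evolution}. I split $\Ehat^{n+1}-\Ehat^n$ into the $\Edg$-increment and the $r^2$-increment. I replace the first by $\ip{\widetilde{\nabla}\Edg}{x^{n+1}-x^n}$ and the second by $(\bar r/\Sigma_*)\ip{G_*}{x^{n+1}-x^n}$. Summing gives $\ip{\eta}{x^{n+1}-x^n}=h\ip{\eta}{S_*\eta}$. The only requirement is $\Sigma_*\neq0$, which holds because $\Esav+C_0\ge\delta>0$ at the reference state, the region of interest being assumed to contain $x^n$, $\bar x$, and $x^{\mathrm{pred}}_{n+1/2}$ as discussed in the paragraph on the choice of $C_0$.

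For the sign statements I check the structure of $S_*$. In the dissipative case $S_*=-M(x_*)$ with $M(x_*)\succeq0$, since $M\succeq0$ pointwise. In the conservative case $S_*=S(x_*)$ is skew because $S(x)^\top=-S(x)$ for every $x$. \Cref{cor:conservation-dissipation} then gives dissipation and conservation respectively. No step-size condition enters anywhere; $h$ appears only as a factor. The only subtle point is the Midpoint case, where the coefficients depend on the unknown. It is handled by the observation that the identity is purely algebraic given any solution, which is why the statement is qualified by ``for every solvable step''. Existence of that solution is a separate well-posedness question and is not needed here.
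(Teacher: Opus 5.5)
Your proposal is correct and matches the paper's proof, which simply notes that each variant is an instance of \eqref{eq:general-dgsav} and then invokes \cref{thm:modified-energy-evolution} together with \cref{cor:conservation-dissipation}. Your explicit remark that the Midpoint coefficients can be treated as fixed numbers once a solution of the step is in hand spells out the justification that the paper's one-line reduction leaves implicit.
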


\begin{proof}
Each variant is an instance of~\eqref{eq:general-dgsav}; the result is then Theorem~\ref{thm:modified-energy-evolution} together with Corollary~\ref{cor:conservation-dissipation}.
\end{proof}

\begin{theorem}[Accuracy of the three variants]\label{thm:variants-accuracy}\label{thm:predictive-diss-properties}
{Assume $\Edg,\Esav \in C^3$, the structure map $S(\cdot)$ is $C^2$, and $\widetilde{\nabla}\Edg$ is a symmetric discrete gradient satisfying the hypotheses of Lemma~\ref{lem:symmDG-midpoint}. In addition, assume that along the exact trajectory and in a sufficiently small neighborhood of it,
\[
  \Esav(x)+C_0 \ge \delta > 0,
\]
so that $\Sigma$, $G/\Sigma$, and the ratios involving $\bar r/\Sigma_*$ are smooth and uniformly bounded. Then:}
\begin{enumerate}[label=(\roman*)]
  \item Forward Euler is first-order accurate ($O(h)$ global error);
  \item Midpoint is self-adjoint ($\Phi_{-h} = \Phi_h^{-1}$) and second-order accurate;
  \item Predictive is second-order accurate.
\end{enumerate}
\end{theorem}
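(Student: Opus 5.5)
Throughout, I would regard each variant as a one-step map $\Phi_h:(x^n,r^n)\mapsto(x^{n+1},r^{n+1})$ on the extended space. I would compare it with the exact flow of the lifted system~\eqref{eq:DG-SAV-continuous-general} started from a point on the constraint manifold $r^n=\Sigma(x^n)$. That manifold is invariant under the continuous flow, and on it the lifted dynamics reduce to $\dot x=S(x)\nabla E(x)$. The local truncation error of the lifted scheme therefore controls the $x$-error of the original problem.

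\textbf{Well-posedness.} For $h$ small, I would solve the implicit step by the implicit function theorem at $h=0$. At $h=0$ the solution is $(x^{n+1},r^{n+1})=(x^n,r^n)$. The Jacobian with respect to the unknowns is the identity plus $O(h)$, and the smoothness assumptions (including $\Esav+C_0\ge\delta$) make every coefficient $C^2$. This yields a unique local solution that depends smoothly on $h$, so a Taylor expansion in $h$ is legitimate. The usual Lipschitz/Gronwall argument then turns an $O(h^{p+1})$ local error into an $O(h^p)$ global error.

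\textbf{Forward Euler.} Substituting the exact solution, I would expand to first order. We have $\widetilde\nabla\Edg(x(t+h),x(t))=\nabla\Edg(x(t))+O(h)$ by consistency and smoothness, and $\bar r=r(t)+O(h)$. The frozen coefficients $S(x^n)$, $\Sigma(x^n)$, $G(x^n)$ are exact at $t$. Both equations are therefore consistent with~\eqref{eq:DG-SAV-continuous-general} to $O(h)$, which gives local error $O(h^2)$ and order one.

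\textbf{Midpoint.} Self-adjointness I would check directly. Swap $(x^{n+1},r^{n+1})\leftrightarrow(x^n,r^n)$ and $h\to-h$. Then $\bar x$ and $\bar r$ are unchanged, $\widetilde\nabla\Edg$ is unchanged by symmetry, and both sides of each equation change sign. Hence $\Phi_{-h}=\Phi_h^{-1}$. For second order, I would use Lemma~\ref{lem:symmDG-midpoint} to write $\widetilde\nabla\Edg=\nabla\Edg(\bar x)+O(h^2)$. The scheme then agrees up to $O(h^3)$ local terms with the implicit midpoint rule applied to the lifted vector field. To see this, write the $r$-equation as $(r^{n+1}-r^n)/h=\langle G(\bar x),(x^{n+1}-x^n)/h\rangle/(2\Sigma(\bar x))$, which is the midpoint evaluation of $\dot r$. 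The midpoint rule is second order. A shortcut is also available: self-adjoint consistent methods have even order, so first-order consistency already gives order two. I would use this shortcut and mention the direct check only as confirmation.

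\textbf{Predictive.} This is the main obstacle, since the method is not symmetric. I would compare it with Midpoint. From~\eqref{eq:predictive-dgsav-pred}, the predictor equals $x(t+h/2)+O(h^2)$ because it is an explicit Euler half-step of the exact vector field; at $r^n=\Sigma(x^n)$ it is exactly $S\nabla E$. The midpoint value $\bar x$ of the exact solution also equals $x(t+h/2)+O(h^2)$. Consequently $S_*$, $\Sigma_*$ and $G_*$ differ from their midpoint-variant values by $O(h^2)$. Inserting the exact solution into the Predictive equations, the residual of the first equation is the midpoint residual, which is $O(h^2)$ in the divided form, plus coefficient perturbations of size $O(h^2)$. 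The residual of the $r$-equation picks up an extra factor $\|x^{n+1}-x^n\|=O(h)$, so it is $O(h^3)$. The key subtle point is that the local defect per step must be $O(h^3)$. For the $x$-equation that means $h\cdot O(h^2)$, which holds because the equation is scaled by $1/h$. Combined with the $O(1)$-stable, invertible Jacobian from the well-posedness step, this gives local error $O(h^3)$, hence second order.
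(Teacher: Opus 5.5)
Your proposal is correct and follows essentially the same route as the paper's proof. Forward Euler is handled by first-order consistency. Midpoint is handled by self-adjointness plus the even-order theorem, with a direct $O(h^3)$ defect check as confirmation. For Predictive, the $O(h^2)$-accurate half-step predictor makes the frozen coefficients midpoint-accurate, Lemma~\ref{lem:symmDG-midpoint} handles the discrete gradient, both equations have $O(h^3)$ residuals, and local stability of the corrector turns these into a one-step defect.

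One small imprecision: as written, the Jacobian of the step equations is not $I+O(h)$, because the $r$-row contains the $O(1)$ block $-G_*^\top/(2\Sigma_*)$. It is, however, unit lower-triangular at $h=0$, and it becomes $I+O(h)$ if you first substitute the $x$-equation into the $r$-equation; either way, that is all the implicit function theorem needs.
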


\begin{proof}
{
For Forward Euler, the SAV coefficients are evaluated at $x^n$; direct consistency therefore gives a one-step defect of $O(h^2)$ and first-order global accuracy.

For Midpoint, interchanging $(x^n,r^n)$ and $(x^{n+1},r^{n+1})$ while replacing $h$ by $-h$ leaves every midpoint quantity and the symmetric discrete gradient unchanged. Since the method is consistent and self-adjoint, the standard result that a symmetric one-step method has even order~\cite[Theorem~II.3.2, pp.~42--43]{HairerLubichWanner2006} implies that it is at least second-order accurate.

For Predictive, start from exact data $x^n=x(t_n)$ and $r^n=r(t_n)$. The explicit half-step satisfies $x_{n+1/2}^{\mathrm{pred}}=x(t_{n+1/2})+O(h^2)$, so smoothness makes $S_{\mathrm{pred}}$, $\Sigma_{\mathrm{pred}}$, and $G_{\mathrm{pred}}$ midpoint-accurate to $O(h^2)$. Moreover, \cref{lem:symmDG-midpoint} gives
\[
\widetilde{\nabla}\Edg(x(t_{n+1}),x(t_n))
=\nabla\Edg(x(t_{n+1/2}))+O(h^2),
\]
and the exact constraint $r=\Sigma(x)$ implies $\bar r/\Sigma_{\mathrm{pred}}=1+O(h^2)$. Hence the generalized gradient, $\eta$, in the $x$-equation equals $\nabla E(x(t_{n+1/2}))+O(h^2)$, producing an $O(h^3)$ residual. Substitution into the $r$-equation and the identity $2r\dot r=\ip{G}{\dot x}$ give the same $O(h^3)$ residual for $r$. Local well-posedness then converts these residual estimates into an $O(h^3)$ one-step defect and $O(h^2)$ global error. The argument uses no sign property of $S$, so it also applies to the conservative case $S=J$. A complete residual calculation is given in Appendix~\ref{sec:supp-proof-theorem-44}.
}
\end{proof}

\subsection{Well-posedness per step}
\label{sec:variants-wellposed}

\begin{proposition}[Local existence and uniqueness for frozen-coefficient variants]\label{prop:well-posedness-diss}
For the Forward Euler and Predictive variants, $(S_*, \Sigma_*, G_*)$ depend only on known data. Fix $R > 0$ and let $B = \{x : \|x - x^n\| \le R\}$. Under the assumptions
\begin{enumerate}[label=(\alph*)]
  \item $\Edg, \Esav \in C^2$ with bounded second derivatives on bounded sets;
  \item $\|S_*\| \le \mu_S < \infty$;
  \item $\widetilde{\nabla}\Edg(\cdot, x^n)$ is Lipschitz on $B$ with constant $L_{\mathrm{DG}}(R)$;
  \item $C_0$ is chosen so that $\Sigma_*^2 = \Esav(x_*) + C_0 \ge \delta > 0$,
\end{enumerate}
there exists $h_0>0$, depending on these bounds and on the known current data, such that for all $0<h<h_0$ the implicit equations~\eqref{eq:general-dgsav} admit a unique solution $(x^{n+1}, r^{n+1})$ with $x^{n+1} \in B$.
\end{proposition}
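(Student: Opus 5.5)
The plan is to eliminate the auxiliary variable and reduce the step to a fixed-point problem in $x$ alone, then apply the Banach fixed-point theorem on $B$. Since $(S_*,\Sigma_*,G_*)$ are fixed by known data, the $r$-equation in \eqref{eq:general-dgsav} is linear in $r^{n+1}$. It gives $r^{n+1}=r^n+\ip{G_*}{x^{n+1}-x^n}/(2\Sigma_*)$, and hence
\[
\bar r = r^n + \frac{\ip{G_*}{x^{n+1}-x^n}}{4\Sigma_*}.
\]
Substituting this into the $x$-equation yields $x^{n+1}=\Phi(x^{n+1})$ with
\[
\Phi(x) := x^n + h\,S_*\Bigl(\widetilde{\nabla}\Edg(x,x^n) + \frac{r^n}{\Sigma_*}G_* + \frac{\ip{G_*}{x-x^n}}{4\Sigma_*^2}\,G_*\Bigr).
\]
Conversely, any fixed point $x^{n+1}$ of $\Phi$, paired with $r^{n+1}$ defined by the formula above, solves \eqref{eq:general-dgsav}. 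Existence and uniqueness of $x^{n+1}\in B$ is therefore equivalent to $\Phi$ having a unique fixed point in $B$.

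The first step is to show that $\Phi$ is a contraction. For $x,y\in B$, assumption (c) and the linearity of the last term give
\[
\|\Phi(x)-\Phi(y)\| \le h\,\mu_S\Bigl(L_{\mathrm{DG}}(R) + \frac{\|G_*\|^2}{4\delta}\Bigr)\|x-y\| =: h\,L\,\|x-y\|.
\]
Here assumption (d) is used in the form $\Sigma_*^2\ge\delta$. The quantity $\|G_*\|=\|\nabla\Esav(x_*)\|$ is a finite number determined by the known data, because $x_*$ is either $x^n$ or the explicit predictor. If a uniform bound is preferred, assumption (a) bounds $\nabla\Esav$ on a bounded set containing $x_*$.

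The second step is to show that $\Phi$ maps $B$ into itself. For $x\in B$,
\[
\|\Phi(x)-x^n\|\le h\,\mu_S\bigl(\|\widetilde{\nabla}\Edg(x^n,x^n)\| + L_{\mathrm{DG}}R + |r^n|\,\|G_*\|/\sqrt{\delta} + \|G_*\|^2R/(4\delta)\bigr).
\]
Consistency \eqref{eq:dg-consistency} gives $\widetilde{\nabla}\Edg(x^n,x^n)=\nabla\Edg(x^n)$. The right-hand side therefore has the form $hK$, where $K$ depends only on $R$, the bounds in (a)--(d), and the current data $(x^n,r^n)$.

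Finally, I choose $h_0=\min\{1/(2L),\,R/K\}$. For $0<h<h_0$, $\Phi$ is a self-map of the closed, hence complete, set $B$ with contraction constant at most $1/2$. Banach's theorem then gives a unique $x^{n+1}\in B$, and $r^{n+1}$ follows explicitly. The only genuine subtlety is making sure every constant depends only on known quantities. For the Predictive variant this requires that the predictor \eqref{eq:predictive-dgsav-pred} is explicitly computable and satisfies (d). I expect this bookkeeping of the constants, rather than the contraction estimate itself, to be the main point needing care.
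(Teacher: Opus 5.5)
Your proposal is correct and follows essentially the same route as the paper: you eliminate $r^{n+1}$ through the affine map $\bar r(x)$, prove a contraction with constant $h\mu_S\bigl(L_{\mathrm{DG}}(R)+\|G_*\|^2/(4\delta)\bigr)$, obtain invariance of $B$ from consistency on the diagonal, and apply Banach with $h_0=\min\{1/(2L),R/K\}$. The only difference is cosmetic: you bound $\|\Phi(x)-x^n\|$ term by term, whereas the paper uses $\|T(x)-T(x^n)\|+\|T(x^n)-x^n\|$; your $K$ is slightly cruder but equally valid.
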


\begin{proof}
Eliminate $r^{n+1}$ from the second equation of~\eqref{eq:general-dgsav}, $r^{n+1} = r^n + \ip{G_*}{x^{n+1}-x^n}/(2\Sigma_*)$, so that $\bar r$ is an affine function of $x^{n+1}$ with Lipschitz constant $\|G_*\|/(4\Sigma_*)$. Substituting into the first equation recasts the system as a fixed-point problem $x^{n+1} = T(x^{n+1})$ on $B$, where
\[
  T(x) := x^n + h\,S_*\!\left(\widetilde{\nabla}\Edg(x, x^n) + \tfrac{\bar r(x)}{\Sigma_*}G_*\right).
\]
For $x, y \in B$, hypothesis~(c) and the affine $\bar r$-dependence give $\|T(x) - T(y)\| \le h\,\mu_S\bigl(L_{\mathrm{DG}}(R) + \|G_*\|^2/(4\Sigma_*^2)\bigr)\|x - y\|$, with $\|G_*\|^2/(4\Sigma_*^2) \le \|G_*\|^2/(4\delta)$ by hypothesis~(d). Together with the invariance bound $\|T(x) - x^n\| \le h\,K(R)$ (triangle inequality from the same estimate plus $\|T(x^n)-x^n\| = h\,\rho_0$ with $\rho_0 := \|S_*(\widetilde{\nabla}\Edg(x^n,x^n) + (r^n/\Sigma_*)G_*)\|$), the threshold $h < h_0$ makes $T : B \to B$ a contraction on $(B, \|\cdot\|)$. Banach's fixed-point theorem yields the unique $x^{n+1} \in B$, and $r^{n+1}$ is determined by the second equation. Both the dissipative case $S_* = -M_*$ and the conservative case $S_* = J_*$ are covered.
\end{proof}

\begin{remark}[Well-posedness of the midpoint variant]
For the midpoint variant the coefficients $(S(\bar x), \Sigma_{\bar x}, G_{\bar x})$ depend on the unknown through $\bar x$, so the Banach argument above does not directly apply. Under the smoothness and positivity assumptions of \cref{thm:variants-accuracy}, the implicit function theorem applied to the midpoint system at $(x^{n+1},r^{n+1},h)=(x^n,r^n,0)$ yields a unique smooth local branch of solutions for sufficiently small~$h$.
\end{remark}

\section{Extension to Poisson Systems}
\label{sec:poisson}

{The augmented skew-tensor formulation of Section~\ref{sec:continuous-tensor-form} provides a natural starting point for extension to Poisson systems. There, the augmented skew-symmetric tensor $K(x)$ together with the discrete-gradient construction was sufficient for conservation of the modified Hamiltonian $\Hhat$. In the Poisson setting, the continuous structure tensor additionally satisfies the Jacobi identity and admits Casimir invariants---functions that are conserved for \emph{any} choice of Hamiltonian. The discrete constructions below use consistent skew-symmetric two-point tensors enforcing selected discrete Casimir conditions; they are not claimed to satisfy a discrete Jacobi identity.} Poisson integrators exploiting a dimension-extended reformulation---which is itself Poisson---were recently derived by Lu, Wang, and Sun~\cite{LuWangSun2025}; the construction below is complementary, enforcing selected discrete Casimir conditions for the original tensor within the frozen-coefficient DG--SAV template.

\subsection{Continuous Poisson form, Casimirs, and the discrete Casimir condition}
\label{sec:casimir-projection}

Consider Poisson dynamics $\dot z = B(z)\,\nabla H(z)$ with skew-symmetric Poisson tensor $B(z)^\top = -B(z)$, and let a \emph{Casimir} be a function $C:\R^d \to \R$ satisfying $B(z)\,\nabla C(z) = 0$ for all~$z$; Casimirs are conserved along all Hamiltonian flows with Poisson tensor $B$, regardless of $H$. For discretization we use a consistent skew-symmetric two-point tensor $\widetilde{B}(z^+, z^-)$ (with $\widetilde{B}(z,z) = B(z)$), the most common choice being the midpoint evaluation $\widetilde{B}(z^+, z^-) = B(\bar z)$ with $\bar z = \tfrac12(z^++z^-)$. For the discrete scheme to preserve Casimirs, $\widetilde{B}$ must satisfy the \emph{discrete Casimir condition}
\begin{equation}\label{eq:discrete-casimir}
  \widetilde{B}(z^+, z^-)\,\widetilde{\nabla} C(z^+, z^-) = 0.
\end{equation}
The midpoint evaluation does \emph{not} satisfy~\eqref{eq:discrete-casimir} in general; the projected tensor construction below enforces it.

\begin{theorem}[Casimir-Compatible Projected Tensor]\label{thm:projected-B}
Let $C_1,\ldots,C_k$ be Casimirs with discrete gradients $\widetilde{\nabla} C_j$. Assume:
\begin{enumerate}[label=(\alph*)]
  \item The discrete gradients $\{\widetilde{\nabla} C_1(z^+,z^-), \ldots, \widetilde{\nabla} C_k(z^+,z^-)\}$ are linearly independent for all $(z^+, z^-)$ in the region of interest.
  \item {The selected functions are Casimirs of the continuous tensor, $B(z)\nabla C_j(z)=0$ for all $j$ and all $z$ in the region of interest.}
\end{enumerate}
For any pair $(z^+,z^-)$, define the matrix of discrete Casimir gradients
\[
  \mathcal{C}(z^+,z^-) = \big[\widetilde{\nabla} C_1(z^+,z^-) \;\cdots\; \widetilde{\nabla} C_k(z^+,z^-)\big] \in \R^{d \times k},
\]
the orthogonal projection
\[
  P(z^+,z^-) = I - \mathcal{C}(\mathcal{C}^\top \mathcal{C})^{-1}\mathcal{C}^\top,
\]
onto $(\mathrm{col}\,\mathcal{C}(z^+,z^-))^\perp$ (where $\mathrm{col}\,\mathcal{C}$ denotes the column space, i.e.\ the set of all linear combinations of the columns of $\mathcal{C}(z^+,z^-)$), and the corrected tensor
\[
  \widetilde{B}_P(z^+,z^-) = P(z^+,z^-)\,\widetilde{B}_0(z^+,z^-)\,P(z^+,z^-),
\]
where $\widetilde{B}_0(z^+,z^-)$ is any consistent skew-symmetric approximation of $B$.

{Then $\widetilde{B}_P$ is a consistent skew-symmetric two-point tensor satisfying the selected discrete Casimir conditions $\widetilde{B}_P\,\widetilde{\nabla} C_j = 0$ for all $j$. No discrete Jacobi identity is asserted.}
\end{theorem}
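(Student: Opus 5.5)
The proof splits into three independent properties of $\widetilde{B}_P = P\widetilde{B}_0P$: skew-symmetry, the discrete Casimir conditions, and consistency. Throughout, the linear independence in hypothesis (a) guarantees that the Gram matrix $\mathcal{C}^\top\mathcal{C}\in\R^{k\times k}$ is invertible. Hence $P$ is well defined, symmetric ($P^\top=P$), and idempotent ($P^2=P$), with $P\mathcal{C}=\mathcal{C}-\mathcal{C}(\mathcal{C}^\top\mathcal{C})^{-1}\mathcal{C}^\top\mathcal{C}=0$. I would record these facts first, since every later step is a direct consequence of them.

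\textbf{Skew-symmetry.} Using $P^\top=P$ and $\widetilde{B}_0^\top=-\widetilde{B}_0$,
\[
\widetilde{B}_P^\top = P^\top\widetilde{B}_0^\top P^\top = -P\widetilde{B}_0P = -\widetilde{B}_P .
\]

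\textbf{Discrete Casimir conditions.} Since $P\mathcal{C}=0$, we have $\widetilde{B}_P\widetilde{\nabla}C_j = P\widetilde{B}_0(P\widetilde{\nabla}C_j)=0$ for each column $j$. Neither step uses the Casimir property of the continuous tensor.

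\textbf{Consistency.} This is the only step that uses hypothesis (b), and it is the one requiring care. On the diagonal, $\widetilde{\nabla}C_j(z,z)=\nabla C_j(z)$ by consistency of the discrete gradients (Definition~\ref{def:discrete-gradient}). So $P(z,z)$ is the orthogonal projector $P_z$ onto $(\mathrm{span}\{\nabla C_j(z)\})^\perp$, and $\widetilde{B}_P(z,z)=P_zB(z)P_z$ because $\widetilde{B}_0(z,z)=B(z)$. Hypothesis (b) gives $B(z)\nabla C_j(z)=0$, so $\mathrm{col}\,\mathcal{C}(z,z)\subseteq\ker B(z)$. By skew-symmetry, $\mathrm{range}\,B(z)=(\ker B(z)^\top)^\perp=(\ker B(z))^\perp\subseteq(\mathrm{col}\,\mathcal{C}(z,z))^\perp$. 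Hence $P_zB(z)=B(z)$. Writing $Q_z=I-P_z$, we also have $B(z)P_z=B(z)-B(z)Q_z=B(z)$, because $Q_z$ maps into $\mathrm{col}\,\mathcal{C}(z,z)\subseteq\ker B(z)$. Therefore $\widetilde{B}_P(z,z)=B(z)$.

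If a quantitative form of consistency were wanted, continuity of the discrete gradients near the diagonal would make $\mathcal{C}$, and hence $P$, continuous there, since the inverse Gram matrix depends continuously on its entries. Then $\widetilde{B}_P(z^+,z^-)\to B(z)$ as $z^\pm\to z$. Smoothness at the order of $\widetilde{B}_0$ and the $\widetilde{\nabla}C_j$ would transfer in the same way, though only diagonal consistency is claimed. The statement asserts no discrete Jacobi identity, so nothing further is needed.
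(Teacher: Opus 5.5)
Your proposal is correct and follows the paper's proof essentially step for step. Skew-symmetry comes from $P^\top=P$, the discrete Casimir conditions from $P\mathcal{C}=0$, and consistency from hypothesis (b) together with skew-symmetry of $B(z)$, which give $P(z,z)B(z)=B(z)=B(z)P(z,z)$.
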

In practice, the principal difficulty in applying Theorem~\ref{thm:projected-B} is the explicit construction of discrete gradients for the Hamiltonian and for each Casimir $C_j$.

\begin{proof}
\emph{Skew-symmetry.} Assumption~(a) makes $\mathcal{C}^\top \mathcal{C}$ invertible, so $P$ is well-defined, and $P^\top = P$ gives $\widetilde{B}_P^\top = (P\,\widetilde{B}_0\,P)^\top = -P\,\widetilde{B}_0\,P = -\widetilde{B}_P$.

{\emph{Consistency.} At $z^+ = z^- = z$, $\mathcal{C}(z,z) = [\nabla C_1(z)\;\cdots\;\nabla C_k(z)]$, so $P(z,z)$ projects onto $(\mathrm{span}\{\nabla C_j(z)\})^\perp$. Since $B(z)\nabla C_j(z)=0$ and $B(z)^\top=-B(z)$, the range of $B(z)$ is orthogonal to each selected $\nabla C_j(z)$, hence $P(z,z)B(z)=B(z)$. Also $B(z)P(z,z)=B(z)$ because $B(z)$ annihilates the selected Casimir-gradient directions removed by $P$. Therefore $P(z,z)B(z)P(z,z)=B(z)$, and consistency of $\widetilde{B}_0$ gives $\widetilde{B}_P(z,z)=B(z)$.}

\emph{Discrete Casimir condition.} By definition of $P$, $P\,\widetilde{\nabla}C_j = 0$ for each $j$ (since $\widetilde{\nabla}C_j$ is a column of $\mathcal{C}$), so $\widetilde{B}_P\,\widetilde{\nabla}C_j = P\,\widetilde{B}_0\,P\,\widetilde{\nabla}C_j = 0$.
\end{proof}

\subsection{Midpoint DG--SAV Poisson Scheme}

\begin{definition}[Midpoint DG--SAV for Poisson Systems]
With $\bar{z} = \frac{1}{2}(z^{n+1} + z^n)$ and Poisson tensor $B$:
\begin{equation}\label{eq:poisson-midpoint}
  \begin{aligned}
    z^{n+1} - z^n &= h\,B(\bar{z})\left(\widetilde{\nabla}\Hdg(z^{n+1},z^n) + \frac{\bar{r}}{\sqrt{\Hsav(\bar{z})+C_0}}\,\nabla\Hsav(\bar{z})\right), \\[6pt]
    r^{n+1} - r^n &= \frac{\ip{\nabla\Hsav(\bar{z})}{z^{n+1} - z^n}}{2\sqrt{\Hsav(\bar{z})+C_0}}.
  \end{aligned}
\end{equation}
The \emph{projected} variant used in our Casimir experiments (\cref{sec:exp-projected-casimir}) is obtained by replacing $B(\bar z)$ in the first equation by the projected tensor $\widetilde{B}_P(z^{n+1},z^n)$ of \cref{thm:projected-B}; both equations remain coupled through $z^{n+1}, z^n$.
\end{definition}

\begin{theorem}[Properties]\label{thm:poisson-midpoint}
{
Assume $\Hdg,\Hsav \in C^3$, and that $\widetilde{\nabla}\Hdg$ is a symmetric discrete gradient satisfying the hypotheses of \cref{lem:symmDG-midpoint}. Let $\widetilde{B}(z^{n+1},z^n)$ be a $C^2$ two-point tensor in a neighborhood of the diagonal that is consistent, skew-symmetric, and invariant under interchange of its two arguments. This includes $\widetilde{B}=B(\bar z)$ for a $C^2$ skew-symmetric Poisson tensor and the projected tensor $\widetilde{B}_P$ of \cref{thm:projected-B} when $\widetilde{B}_0$ and the Casimir discrete gradients are $C^2$ in a neighborhood of the diagonal and invariant under the same interchange. Then the scheme~\eqref{eq:poisson-midpoint}, with $B(\bar z)$ replaced by $\widetilde{B}$:
}
\begin{enumerate}[label=(\roman*)]
  \item conserves the modified Hamiltonian exactly;
  \item is self-adjoint and second-order accurate;
  \item preserves Casimirs if the discrete Casimir condition $\widetilde{B}\,\widetilde{\nabla}C = 0$ holds.
\end{enumerate}
\end{theorem}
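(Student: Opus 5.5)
Each of the three claims reduces to an earlier result once the scheme \eqref{eq:poisson-midpoint} with $B(\bar z)$ replaced by $\widetilde{B}$ is recognized as an instance of the general template \eqref{eq:general-dgsav}. For part (i), take $S_* := \widetilde{B}(z^{n+1},z^n)$, $\Sigma_* := \sqrt{\Hsav(\bar z)+C_0}$ and $G_* := \nabla\Hsav(\bar z)$. The $r$-equation then has exactly the form required by the template. Note that \cref{thm:modified-energy-evolution} never uses that $S_*$, $\Sigma_*$, $G_*$ are known in advance; it uses only the discrete chain rule for $\widetilde{\nabla}\Hdg$ and the identity $(r^{n+1})^2-(r^n)^2=2\bar r(r^{n+1}-r^n)$. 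It therefore gives $\Hhat^{n+1}-\Hhat^n=h\ip{\eta}{\widetilde{B}\eta}$, and skew-symmetry of $\widetilde{B}$ makes this zero by \cref{cor:conservation-dissipation}(i).

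For part (ii), I would repeat the Midpoint argument of \cref{thm:variants-accuracy}. Interchanging $(z^n,r^n)\leftrightarrow(z^{n+1},r^{n+1})$ and replacing $h$ by $-h$ leaves unchanged:
\begin{itemize}
\item $\bar z$ and $\bar r$;
\item $\Sigma_*$ and $G_*$, since they depend only on $\bar z$;
\item $\widetilde{\nabla}\Hdg$, since it is symmetric;
\item $\widetilde{B}$, since it is invariant under interchange of its arguments by hypothesis.
\end{itemize}
Both equations are odd under the interchange, because their left-hand sides change sign together with $h$. Hence $\Phi_{-h}=\Phi_h^{-1}$ on the local branch of solutions; that branch exists by the implicit function theorem, as in the remark on Midpoint well-posedness, using the $C^2$/$C^3$ hypotheses. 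Consistency follows from $\widetilde{B}(z,z)=B(z)$ and $\widetilde{\nabla}\Hdg(z,z)=\nabla\Hdg(z)$. On the constraint manifold the right-hand side reduces to $B(z)\nabla H(z)$, and the $r$-equation reduces to $2r\dot r=\ip{\nabla\Hsav}{\dot z}$. So the method has order at least one, and the cited even-order result for symmetric methods \cite[Theorem~II.3.2]{HairerLubichWanner2006} upgrades this to order two.

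For the projected tensor, I would still have to check that $\widetilde{B}_P$ meets the hypotheses. Consistency and skew-symmetry come from \cref{thm:projected-B}. Symmetry holds because $P$ is built from symmetric Casimir discrete gradients and $\widetilde{B}_0$ is symmetric. Smoothness holds because $(\mathcal{C}^\top\mathcal{C})^{-1}$ is $C^2$ where the columns of $\mathcal{C}$ are independent.

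For part (iii), apply the discrete chain rule for $C$: $C(z^{n+1})-C(z^n)=\ip{\widetilde{\nabla}C}{z^{n+1}-z^n}=h\ip{\widetilde{\nabla}C}{\widetilde{B}\eta}=-h\ip{\widetilde{B}\widetilde{\nabla}C}{\eta}=0$, using skew-symmetry of $\widetilde{B}$ and the discrete Casimir condition. The $r$-equation does not enter.

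I expect the only delicate point to be the self-adjointness and order argument in (ii). The step is implicit in all coefficients, so the statement $\Phi_{-h}=\Phi_h^{-1}$ must be made on the same local solution branch for $h$ and $-h$, which requires uniqueness near the diagonal. The cited symmetric-method theorem also needs the method map to be smooth in $h$. Both follow from the implicit function theorem under the stated smoothness and $\Hsav+C_0\ge\delta>0$, and I would state this explicitly rather than reprove it.
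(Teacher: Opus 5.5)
Your proposal is correct and follows the paper's proof essentially step for step. Part (i) is the identity of \cref{thm:modified-energy-evolution} with $S_*=\widetilde{B}$; the paper writes the same computation through the augmented skew tensor $K_h$. Part (ii) is interchange invariance plus the even-order theorem for symmetric methods. Part (iii) is the discrete chain rule for $C$ combined with skew-symmetry and the discrete Casimir condition. Your verification of the hypotheses for $\widetilde{B}_P$ is the paper's appendix argument.

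The paper also lists midpoint consistency as an ingredient of (ii), and you should add it. With only the stated $C^2$ data the method map is only $C^2$ in $h$, so symmetry alone yields an $o(h^2)$ local error rather than $O(h^3)$. By contrast, interchange invariance and the even-function argument of \cref{lem:symmDG-midpoint} give $\widetilde{B}(z^+,z^-)=B(\bar z)+O(\|z^+-z^-\|^2)$. Combined with that lemma for $\widetilde{\nabla}\Hdg$, this gives the $O(h^3)$ residual directly.
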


\begin{proof}
\emph{(i) Modified Hamiltonian conservation.} {Write the method in the augmented tensor form of \cref{sec:continuous-tensor-form}, with $K_h$ assembled from $\widetilde{B}$ and the SAV coefficients. The modified-Hamiltonian increment is $h\,\widetilde{\nabla}\Hhat^\top K_h\widetilde{\nabla}\Hhat$, which vanishes because $K_h$ inherits skew-symmetry from $\widetilde{B}$.}

\emph{(ii) Self-adjointness and second-order accuracy.} {Interchanging $(z^n,r^n)$ and $(z^{n+1},r^{n+1})$ and replacing $h$ by $-h$ leaves $\bar z$, $\bar r$, $\widetilde{\nabla}\Hdg$, and $\widetilde{B}$ unchanged, while reversing both increments. Thus the scheme is self-adjoint. The $O(h^2)$ midpoint consistency of \cref{lem:symmDG-midpoint}, smoothness of $\widetilde{B}$, $\Hsav$, and $\nabla\Hsav$, and the order-doubling theorem~\cite{HairerLubichWanner2006} give second-order accuracy.}

\emph{(iii) Casimir preservation.} With $\eta = \widetilde{\nabla}\Hdg + \bigl(\bar r/\sqrt{\Hsav(\bar z)+C_0}\bigr)\,\nabla\Hsav(\bar z)$ and the discrete Casimir condition $\widetilde{B}\,\widetilde{\nabla}C = 0$,
\[
  C(z^{n+1}) - C(z^n) = \ip{\widetilde{\nabla}C}{z^{n+1} - z^n} = h\,\ip{\widetilde{\nabla}C}{\widetilde{B}\,\eta} = -h\,\eta^\top \widetilde{B}\,\widetilde{\nabla}C = 0
\]
by skew-symmetry of $\widetilde{B}$ and the discrete Casimir condition.
\end{proof}

\section{Separable Hamiltonians and Explicit DG--SAV Schemes}
\label{sec:separable}

{
For separable Hamiltonians $H(q,p)=T(p)+V(q)$ with quadratic kinetic energy, the DG--SAV framework yields two fully explicit schemes: the Forward Euler and Predictive variants. With the paper-wide normalization $r^2=V+C_0$, both exactly conserve the modified Hamiltonian by \cref{cor:conservation-dissipation}. This specialization uses the same scalar quadratization as Bilbao, Ducceschi, and Zama~\cite{Bilbao2023}, with the normalizations related by $\psi_{\mathrm{BDZ}}=\sqrt{2}\,r$ and $g_{\mathrm{BDZ}}=g/\sqrt{2}$. The time discretizations are presented differently: Bilbao, Ducceschi, and Zama employ centered staggered variables, whereas the schemes below are collocated one-step specializations of the general DG--SAV template.
}

\subsection{Setup}
\label{sec:separable-setup}

{
Take the canonical Hamiltonian system $\dot z=J\nabla H(z)$, $z=(q,p)$, $J=\bigl(\begin{smallmatrix}0&I\\-I&0\end{smallmatrix}\bigr)$, with $H(q,p)=\tfrac12p^\top M^{-1}p+V(q)$. We split
}
\[
  \Hdg(q,p) = \tfrac12 p^\top M^{-1}p, \qquad \Hsav(q) = V(q),
\]
{
so the DG part is quadratic and the nonlinear dependence is confined to the SAV part. Choose $C_0$ so that $V(q)+C_0\ge\delta>0$ on the relevant region; if a global lower bound is used, take $C_0>-\inf_qV(q)$. Define
}
\[
  r(q):=\sqrt{V(q)+C_0}, \qquad
  g(q):=\frac{\nabla V(q)}{\sqrt{V(q)+C_0}}, \qquad
  \nabla V(q)=r(q)\,g(q).
\]
{
This is the same normalization used in the general framework. The modified Hamiltonian is
}
\[
  \Hhat(q,p,r)=\tfrac12p^\top M^{-1}p+r^2=H(q,p)+C_0.
\]
{
Since $\Hdg$ is quadratic, its discrete gradient is $\widetilde{\nabla}\Hdg=(0,M^{-1}\bar p)^\top$, where $\bar p:=\tfrac12(p_{n+1}+p_n)$.
}

\subsection{Triangular elimination}
\label{sec:separable-lemma}

The shared algebra of the Forward Euler and Predictive variants is summarized in the following lemma.

\begin{lemma}[Explicit triangular resolution]\label{lem:triangular-resolution}
Fix $g_*\in\R^d$ and $\bar r:=\tfrac12(r_{n+1}+r_n)$. The corrector system
\begin{equation}\label{eq:sep-corrector}
\begin{aligned}
  q_{n+1} &= q_n + \tfrac h2 M^{-1}(p_{n+1}+p_n), \quad
  p_{n+1} = p_n - \tfrac h2 g_*(r_{n+1}+r_n), \\
  r_{n+1} - r_n &= \tfrac h4 g_*^\top M^{-1}(p_{n+1}+p_n)
\end{aligned}
\end{equation}
admits the explicit triangular solution
\begin{equation}\label{eq:sep-explicit}
  r_{n+1} = \frac{(1-\alpha_*)\,r_n + \tfrac h2\,g_*^\top M^{-1}p_n}{1+\alpha_*}, \qquad
  \alpha_* := \tfrac{h^2}{8}\,g_*^\top M^{-1} g_*,
\end{equation}
{
followed by $p_{n+1}=p_n-\tfrac h2g_*(r_{n+1}+r_n)$ and $q_{n+1}=q_n+\tfrac h2M^{-1}(p_{n+1}+p_n)$. Thus the update $r_{n+1}\to p_{n+1}\to q_{n+1}$ requires no nonlinear solve.
}
\end{lemma}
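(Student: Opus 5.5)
The plan is pure linear elimination: substitute the $p$-update into the $r$-equation so that everything is expressed in terms of the single unknown $r_{n+1}$, solve that scalar linear equation, and then recover $p_{n+1}$ and $q_{n+1}$ by back-substitution. The system is block triangular in a suitable sense. The $q$-equation does not feed back into the other two, since $g_*$ is fixed. The pair of $(p,r)$-equations is linear in $(p_{n+1},r_{n+1})$ with known data $(p_n,r_n,g_*)$.

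\emph{Step 1: express $p_{n+1}+p_n$ through $r_{n+1}$.} From the $p$-equation,
\[
p_{n+1}+p_n = 2p_n - \tfrac h2 g_*(r_{n+1}+r_n).
\]

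\emph{Step 2: reduce the $r$-equation to a scalar linear equation.} Substituting Step 1 into the $r$-equation gives
\[
r_{n+1}-r_n = \tfrac h2 g_*^\top M^{-1}p_n - \tfrac{h^2}{8}\,g_*^\top M^{-1}g_*\,(r_{n+1}+r_n).
\]
In terms of $\alpha_*$ this reads
\[
(1+\alpha_*)\,r_{n+1} = (1-\alpha_*)\,r_n + \tfrac h2\, g_*^\top M^{-1}p_n,
\]
which is exactly \eqref{eq:sep-explicit} after division by $1+\alpha_*$.

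\emph{Step 3: check that the division is legitimate.} The only point needing care is that $1+\alpha_*\neq 0$. The mass matrix $M$ is symmetric positive definite, so $M^{-1}\succ 0$. Hence $\alpha_*=\tfrac{h^2}{8}g_*^\top M^{-1}g_*\ge 0$ and $1+\alpha_*\ge 1$. The scalar equation therefore has a unique solution for every $h$, with no step-size restriction.

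\emph{Step 4: back-substitute and conclude.} Plugging $r_{n+1}$ back into the $p$-equation determines $p_{n+1}$ explicitly. Then the $q$-equation determines $q_{n+1}$ explicitly from $p_{n+1}$ and $p_n$. Each step only uses already-computed quantities, so the order $r_{n+1}\to p_{n+1}\to q_{n+1}$ requires no nonlinear solve, and the solution is unique.

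I expect no real obstacle here. Beyond the positivity check in Step 3, the only thing to watch is the bookkeeping of the factor $\tfrac h4\cdot\tfrac h2=\tfrac{h^2}{8}$ when forming $\alpha_*$.
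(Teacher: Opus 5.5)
Your proposal is correct and follows the paper's proof: you substitute the $p_{n+1}$-equation into the $r$-equation, obtain the scalar linear equation $(1+\alpha_*)r_{n+1}=(1-\alpha_*)r_n+\tfrac h2\, g_*^\top M^{-1}p_n$, and back-substitute for $p_{n+1}$ and then $q_{n+1}$. Your Step 3 check that $1+\alpha_*\ge 1$ relies on $M\succ 0$, which the paper assumes only implicitly as a mass matrix; it is a worthwhile addition, since it gives unique solvability for every $h$, and the paper's proof leaves it unstated.
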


\begin{proof}
{
Substitution of the $p_{n+1}$ equation into the $r$-equation gives
\[
r_{n+1}-r_n=\tfrac h2g_*^\top M^{-1}p_n-\alpha_*(r_{n+1}+r_n).
\]
Rearranging yields \eqref{eq:sep-explicit}; the $p_{n+1}$ and $q_{n+1}$ updates then follow directly.
}
\end{proof}

{
The corrector~\eqref{eq:sep-corrector} is the unified template~\eqref{eq:euler-dgsav} specialized to $S=J$, $\Hdg=T$, and $\Hsav=V$, with $g_*=\nabla V/\sqrt{V+C_0}$ evaluated at a state-dependent reference point. The two explicit variants correspond to the two reference states in \cref{def:three-variants}.
}

\subsection{Euler and Predictive variants as corollaries}
\label{sec:separable-variants}
\label{sec:separable-predictive}

\begin{corollary}[Fully explicit Euler and Predictive variants]\label{cor:separable-explicit}\label{cor:separable-euler}\label{cor:separable-predictive}
Specializing \cref{def:three-variants} to the separable case in \cref{lem:triangular-resolution} yields:
\begin{enumerate}[label=(\roman*)]
  \item \emph{Forward Euler:} $g_* = g(q_n)$. The scheme is fully explicit, exactly preserves $\Hhat$ by \cref{cor:conservation-dissipation}(i), and is first-order accurate by \cref{thm:variants-accuracy}\textnormal{(i)}; the loss of order arises from freezing $g$ at the left endpoint.
  \item \emph{Predictive:} the unified predictor~\eqref{eq:predictive-dgsav-pred} reduces in the separable case to the explicit Euler half-step $q_{n+1/2}^{\mathrm{pred}} = q_n + \tfrac h2 M^{-1} p_n$, and we set $g_* = g\bigl(q_{n+1/2}^{\mathrm{pred}}\bigr)$. The resulting scheme is fully explicit, exactly preserves $\Hhat$, and is second-order accurate by \cref{thm:variants-accuracy}\textnormal{(iii)}: the $O(h^2)$ predictor accuracy propagates to $g_*$ by smoothness, and the quadratic kinetic part contributes no further error.
\end{enumerate}
\end{corollary}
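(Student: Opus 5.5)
The argument reduces each variant to an instance of \cref{lem:triangular-resolution} and of the general template~\eqref{eq:euler-dgsav}. After that, explicitness, conservation and order follow from the lemma, from \cref{cor:conservation-dissipation}(i), and from \cref{thm:variants-accuracy}, respectively.

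\textbf{Step 1: specialize the template.} Set $z=(q,p)$, $S_*=J$ (constant, so no reference state is needed for $S$), $\Hdg=\tfrac12p^\top M^{-1}p$ and $\Hsav=V$. With the quadratic discrete gradient $\widetilde{\nabla}\Hdg=(0,M^{-1}\bar p)$ and $G_*=(\nabla V(q_*),0)$, we have $G_*/\Sigma_*=(g(q_*),0)$. The $x$-equation then becomes $q_{n+1}-q_n=hM^{-1}\bar p$ and $p_{n+1}-p_n=-h\bar r\,g(q_*)$. The $r$-equation becomes $r_{n+1}-r_n=\tfrac12 g(q_*)^\top(q_{n+1}-q_n)=\tfrac h4 g_*^\top M^{-1}(p_{n+1}+p_n)$. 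This is exactly~\eqref{eq:sep-corrector} with $g_*=g(q_*)$.

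\textbf{Step 2: identify $q_*$ for each variant.} For Forward Euler, $q_*=q_n$ is known. For Predictive, evaluate~\eqref{eq:predictive-dgsav-pred} with $S=J$. Its $q$-component is $\tfrac h2$ times the $p$-gradient of $\Hdg+\tfrac{r^n}{\Sigma}\Hsav$, namely $\tfrac h2 M^{-1}p_n$, because $V$ does not depend on $p$. This gives $q^{\mathrm{pred}}_{n+1/2}=q_n+\tfrac h2M^{-1}p_n$. The $p$-component of the predictor is also explicit, but it never enters the frozen coefficients, since $\Sigma_*$ and $G_*$ depend only on $q$. In both variants $g_*$ is therefore computed from known data. \cref{lem:triangular-resolution} then gives the explicit update $r_{n+1}\to p_{n+1}\to q_{n+1}$. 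The only division is by $1+\alpha_*$, which is positive because $M^{-1}\succ0$, so no step-size restriction arises.

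\textbf{Step 3: conservation and order.} Both variants are instances of~\eqref{eq:general-dgsav} with a skew $S_*=J$. Hence \cref{prop:variants-energy-law} and \cref{cor:conservation-dissipation}(i) give $\Hhat^{n+1}=\Hhat^n$. For accuracy we invoke \cref{thm:variants-accuracy}(i) and (iii), after checking its hypotheses. The AVF discrete gradient of a quadratic is the midpoint gradient, which is symmetric and smooth, and $J$ is constant. The remaining hypotheses are $V\in C^3$ and $V+C_0\ge\delta$ near the trajectory. The well-posedness input that the theorem uses is trivial here because the solution is explicit and unique.

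The only point needing care, and the main obstacle, is the Predictive order claim. \cref{thm:variants-accuracy}(iii) uses the full-state predictor, whereas here only its $q$-part is used. The plan is to note that the proof there requires only that $S_*$, $\Sigma_*$ and $G_*$ be $O(h^2)$-accurate at the midpoint. Since these depend solely on $q$, and $q_n+\tfrac h2M^{-1}p_n=q(t_{n+1/2})+O(h^2)$ by Taylor expansion, that residual argument carries over verbatim. The kinetic part is treated exactly by the midpoint gradient $M^{-1}\bar p$, so it introduces no additional error.
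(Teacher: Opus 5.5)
Your proposal is correct and follows the paper's route: specialize the template with $S_*=J$, use \cref{lem:triangular-resolution} for explicitness, \cref{cor:conservation-dissipation}(i) for exact conservation of $\Hhat$, and \cref{thm:variants-accuracy}(i),(iii) for the orders. The Predictive ``obstacle'' you flag is not a real one. The specialized scheme is literally \cref{def:three-variants}(iii), because the $p$-part of the predictor never enters $\Sigma_*$, $G_*$, or the constant $S_*=J$, as you already observe in Step 2. So the theorem applies verbatim.
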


\section{Numerical Experiments}
\label{sec:numerics}

{
Four experiments isolate the settings addressed by the framework. A logarithmic Flory--Huggins Allen--Cahn equation compares the DG--SAV variants with classical SAV and Full DG for a non-polynomial dissipative PDE. A non-quadratic Ohta--Kawasaki-type model tests whether the energy split can retain Full DG accuracy while reducing the cost of a nonlocal discrete gradient. A separable double-well Hamiltonian illustrates the explicit schemes of \cref{sec:separable}. Finally, a Poisson system with a nonlinear cubic Casimir tests the projected tensor of \cref{sec:casimir-projection}.
}

\paragraph{{Timing protocol}}
{
All reported timings are wall-clock means over repeated runs on the same machine after one warm-up run. Per-step times exclude one-time grid and operator assembly but include linear-solver work and, where applicable, all Picard or Newton iterations. The reported ``solves/step'' counts include every inner iteration required to reach the stated tolerance.
}

\subsection{Dissipative PDE: Allen--Cahn with logarithmic Flory--Huggins potential}
\label{sec:num-allen-cahn}

\paragraph{{Problem}}
We consider the Allen--Cahn equation $u_t = -\delta E/\delta u$ with the logarithmic Flory--Huggins free energy
\begin{equation}\label{eq:ac-fh-energy}
  E(u) = \tfrac{\varepsilon^2}{2}\!\int_\Omega |\nabla u|^2\,dx
       + \theta\!\int_\Omega \bigl[u\ln u + (1-u)\ln(1-u)\bigr] dx
       - \tfrac{\theta_c}{2}\!\int_\Omega \bigl(u-\tfrac12\bigr)^2 dx,
\end{equation}
{
This is a standard model of binary phase separation in polymer mixtures and binary alloys~\cite{ShenXuYang2018}.
}

\paragraph{{DG--SAV splitting}}
We take
\begin{equation*}
\begin{aligned}
  \Edg(u) &= \tfrac{\varepsilon^2}{2}\!\int_\Omega |\nabla u|^2\,dx \quad\text{(quadratic, Fourier-implicit)}, \\
  \Esav(u) &= \int_\Omega F(u)\,dx \quad\text{(non-polynomial, SAV)}.
\end{aligned}
\end{equation*}
{
Because $\Edg$ is quadratic, its discrete gradient is the midpoint average and the Dirichlet term receives a Crank--Nicolson treatment; the full nonlinearity $F$ is assigned to the auxiliary variable. This example therefore compares Full DG, which evaluates the logarithmic AVF at every iteration, with SAV-type treatments that freeze the nonlinear coefficient. The $p=6$ benchmark in \cref{sec:num-nonquadratic-benchmark} considers the complementary case in which a non-quadratic term remains in $\Edg$. The logarithmic AVF is available in closed form through $\int\ln x\,dx=x\ln x-x$:
}
\begin{multline*}
  \widetilde{\nabla}_{\mathrm{AVF}} \!\!\int F(u)\,dx \;=\; \theta\,\frac{u^+\!\ln u^+ - u^-\!\ln u^-}{u^+ - u^-} \\
  + \theta\,\frac{(1-u^+)\!\ln(1-u^+) - (1-u^-)\!\ln(1-u^-)}{u^+ - u^-} - \theta_c\,\bigl(\tfrac{u^+ + u^-}{2} - \tfrac12\bigr),
\end{multline*}
{
Each Full DG Picard iteration nevertheless requires two pointwise logarithm evaluations per node, and the iteration converges more slowly near the steep regions close to $u=0$ and $u=1$ than for a polynomial double well. We use a Fourier spectral discretization with $N=128$ on $[0,2\pi)$, $\varepsilon=0.05$, $C_0=10$, and a fixed-seed random perturbation of amplitude $0.05$ around the unstable equilibrium $u=1/2$. The final time is $T=2$, except in the convergence study, where $T=0.5$.
}

\paragraph{{Convergence}}
{
\Cref{fig:fh-conv-gap-cost} (left) reports relative $L^2$ errors against a Full DG reference computed with $\Delta t=5\times10^{-4}$. Every second-order method attains the expected $O(\Delta t^2)$ rate; their principal differences are in the error constants.
}

\paragraph{{Cost--accuracy comparison}}
{
\Cref{fig:fh-conv-gap-cost,fig:fh-workprecision} show three cost levels. Full DG and Midpoint DG--SAV have the smallest errors and the highest costs, requiring about nine Picard iterations per step in this regime; Full DG and Midpoint DG--SAV have comparable per-step cost at essentially the same accuracy (their ordering is machine-dependent). SAV/CN and SAV/BDF2 have lower per-step costs but error constants that are $4$--$7\times$ larger. Predictive DG--SAV is second order and uses a non-iterative linear corrector with two nonlinear-gradient evaluations per step, one at the current state and one at the predicted half-step; in the tested moderate-accuracy range it is approximately four times more accurate than SAV/CN at comparable cost and roughly twice as fast as Full DG at comparable accuracy. Since $\Edg$ is quadratic here, the difference between Predictive DG--SAV and SAV/CN comes primarily from the explicit half-step predictor rather than the multistep extrapolant $\tfrac12(3u^n-u^{n-1})$.
}

\begin{figure}[!htbp]
\centering
\includegraphics[width=0.99\textwidth]{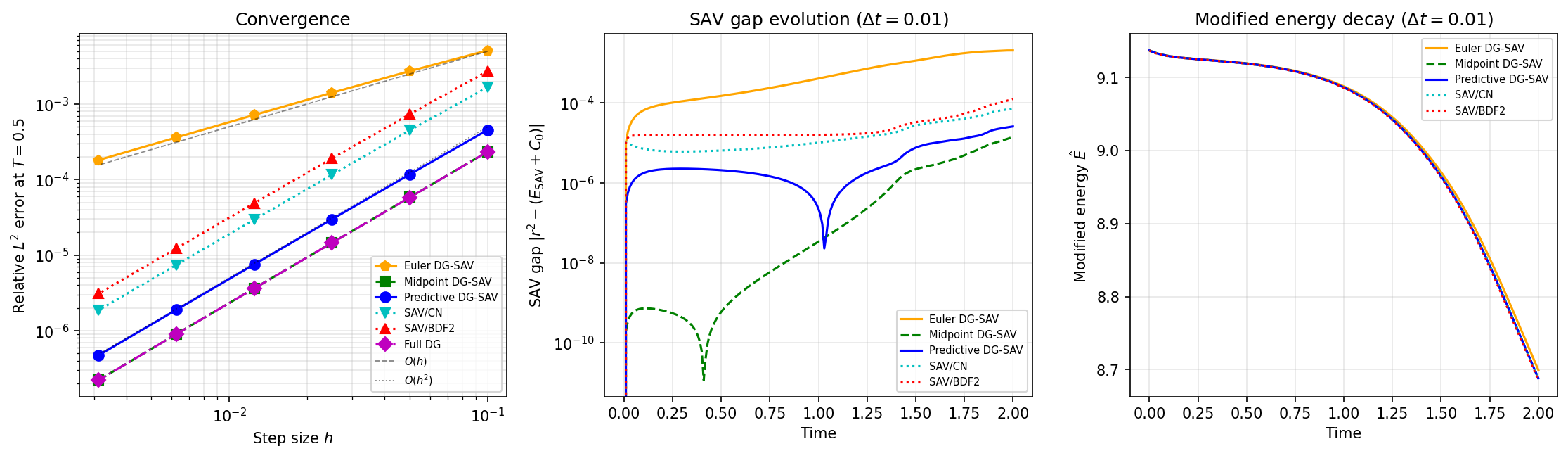}
\caption{Flory--Huggins Allen--Cahn benchmark ($N=128$, $\varepsilon=0.05$, $T=0.5$).
Left: $L^2$ convergence against the Full DG reference.
Center: SAV gap $|r^2-(\Esav+C_0)|$ at $\Delta t=0.01$, $T=2$; the second-order DG--SAV gaps remain below those of the classical SAV schemes, while Forward Euler DG--SAV has the largest gap.
Right: modified energy $\hat E$ at $\Delta t=0.01$, $T=2$; every SAV and DG--SAV method dissipates its modified energy monotonically. Full DG dissipates the original energy and is omitted. Wall-clock cost is reported in \cref{fig:fh-workprecision}.}
\label{fig:fh-conv-gap-cost}
\end{figure}

\begin{figure}[!htbp]
\centering
\includegraphics[width=0.7\textwidth]{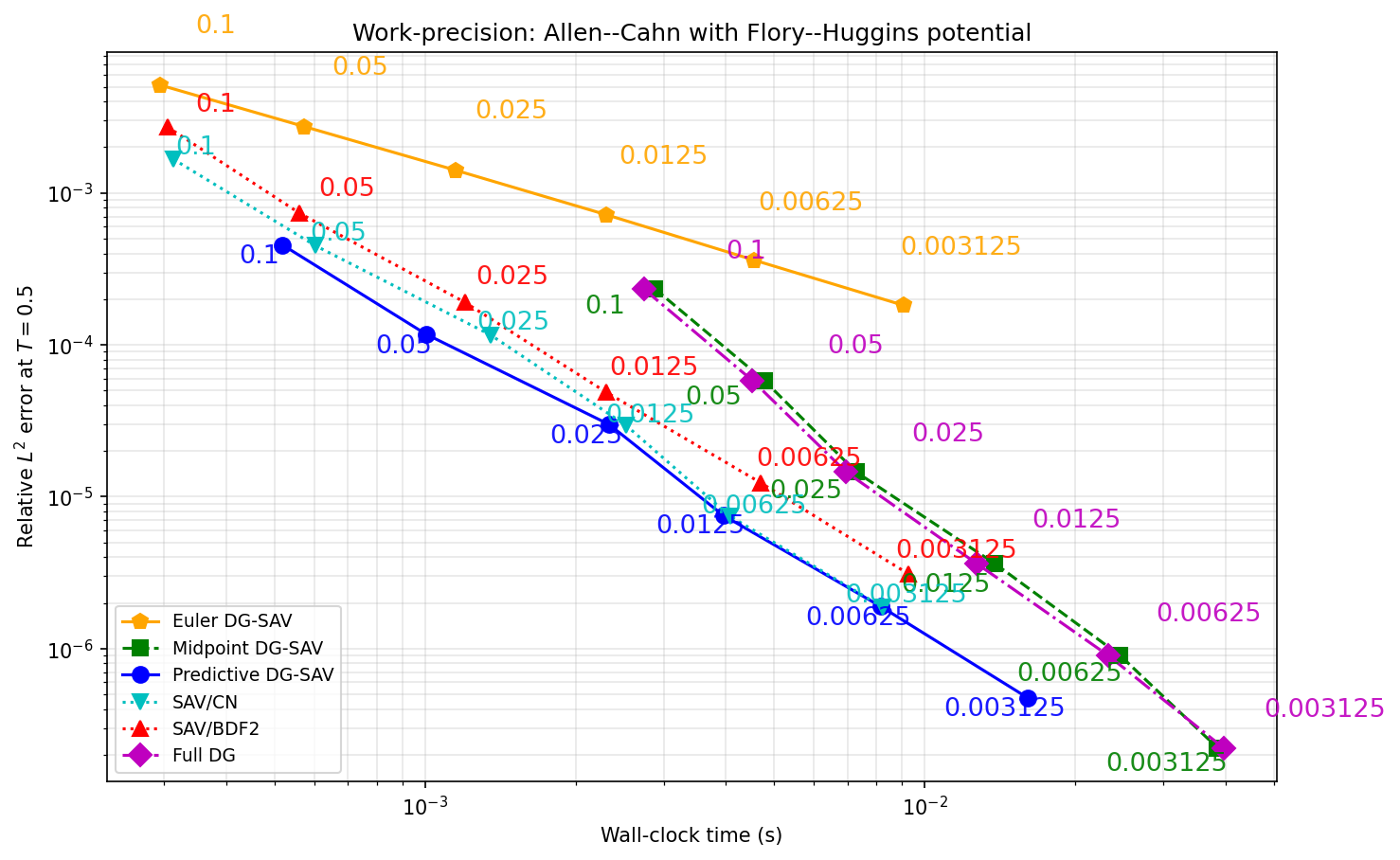}
\caption{Work--precision diagram for the Flory--Huggins benchmark. In the tested moderate-to-high-accuracy range, Predictive DG--SAV is more accurate than SAV/CN and SAV/BDF2 at comparable cost and cheaper than Full DG and Midpoint DG--SAV at comparable accuracy.}
\label{fig:fh-workprecision}
\end{figure}

\paragraph{{SAV gap}}
{
At $\Delta t=0.01$ and $T=2$, the final SAV gap is approximately $1\times10^{-5}$ for Midpoint DG--SAV and $3\times10^{-5}$ for Predictive DG--SAV, compared with $7\times10^{-5}$ for SAV/CN and $1\times10^{-4}$ for SAV/BDF2. Thus the second-order DG--SAV variants remain a few times closer to the constraint manifold in this experiment; Forward Euler DG--SAV has the larger gap $2\times10^{-3}$.
}

\paragraph{{Discussion}}
{
Appendix~\ref{sec:supp-polynomial-ac} reports the complementary polynomial double-well case, where the closed-form AVF makes Full DG competitive with Midpoint DG--SAV and leaves the non-iterative Predictive variant as the principal lower-cost alternative.
}

\subsection{Efficiency Benchmark: Non-Quadratic Nonlocal PDE}
\label{sec:num-nonquadratic-benchmark}

\paragraph{{Problem}}
We consider the $H^{-1}$ gradient flow of the Ohta--Kawasaki-type energy
\begin{equation}\label{eq:p6-energy}
  E[u] = \frac{\varepsilon^2}{2}\int_\Omega |\nabla u|^2\,dx
       + \frac{1}{4}\int_\Omega (u^2-1)^2\,dx
       + \frac{\gamma}{p}\int_\Omega \bigl|(-\Delta)^{-1/2}(u-m_0)\bigr|^p\,dx,
\end{equation}
{
Here $u_t=\Delta\mu$, $\mu=\delta E/\delta u$, on $\Omega=[0,2\pi)^2$ with periodic boundary conditions and prescribed mean $m_0$. The state space satisfies $\int_\Omega u\,dx=m_0|\Omega|$, and $(-\Delta)^{-1/2}$ acts on the mean-zero component $u-m_0$ through the periodic pseudoinverse. The case $p=2$ is the standard Ohta--Kawasaki diblock copolymer model; for $p\ne2$, the nonlocal term is non-quadratic. This experiment examines whether DG--SAV provides an intermediate accuracy--cost regime between classical SAV and Full DG.
}

\paragraph{{Why $p$ matters}}
{
For $p=2$, the midpoint discrete gradient of the nonlocal term is exact and can be included in the implicit Fourier operator. Full DG then dissipates the original energy exactly without an auxiliary variable, whereas assigning this quadratic term to SAV introduces a nonzero gap without simplifying the implicit operator. Appendix~\ref{sec:supp-polynomial-ac} documents the analogous closed-form-AVF regime for the polynomial double well. We therefore focus on $p=6$. The split assigns the Dirichlet and double-well terms
\[
\Edg(u)=\tfrac{\varepsilon^2}{2}\!\int_\Omega|\nabla u|^2\,dx
+\tfrac14\!\int_\Omega(u^2-1)^2\,dx
\]
to the discrete gradient and the nonlocal term
\[
\Esav(u)=\tfrac{\gamma}{6}\!\int_\Omega|(-\Delta)^{-1/2}(u-m_0)|^6\,dx
\]
to the auxiliary variable. Full DG evaluates the nonlocal AVF through a polynomial expansion requiring approximately six additional FFTs per Picard iteration; DG--SAV avoids those evaluations.

We compare SAV/CN and SAV/BDF2, which assign both nonlinear terms to one auxiliary variable; Forward Euler, Predictive, and Midpoint DG--SAV, which retain the pointwise double well in a closed-form AVF; and Full DG, which applies AVF discrete gradients to both nonlinear contributions and dissipates the original energy exactly.
}

\paragraph{{Numerical setup}}
{
We use a $64\times64$ Fourier grid, $\varepsilon=0.05$, $\gamma=5$, $m_0=0$, $C_0=1$, and $T=1$. The fixed-seed initial data are $u_0=0.1(\xi-\overline\xi)$ with $\xi\sim\mathcal N(0,1)$, and hence have exactly zero numerical mean. The reference solution is computed with SciPy's DOP853 method using \texttt{rtol}$=10^{-12}$ and \texttt{atol}$=10^{-14}$.
}

\paragraph{{Convergence}}
{
\Cref{fig:p6-convergence} (left) reports the $L^2$ errors. Predictive and Midpoint DG--SAV agree with Full DG to three significant digits across the tested step sizes and enter the asymptotic $O(\Delta t^2)$ regime near $\Delta t=10^{-3}$. At the finest step size, $\Delta t=10^{-5}$, the second-order error curves flatten near $8\times10^{-7}$; this point appears to be limited by the numerical reference and is therefore not used to infer the asymptotic order. SAV/CN and SAV/BDF2 converge with larger errors and a later asymptotic onset; at $\Delta t=2\times10^{-4}$, their errors remain one to two orders of magnitude larger than those of DG--SAV. These results indicate that one SAV scalar does not track the two nonlinear contributions as accurately as the split treatment in this test. Forward Euler DG--SAV is formally first order, but its first-order component is not resolved over the tested step-size range; its SAV gap nevertheless separates the frozen-coefficient approximation from the second-order variants.
}

\paragraph{{Energy decay and SAV gap}}
{
No increase in the relevant energy was observed: SAV/CN, SAV/BDF2, and the DG--SAV variants dissipate their modified energies, while Full DG dissipates the original energy. At $\Delta t=2\times10^{-4}$, the SAV gap is approximately $3\times10^{-10}$ for Midpoint DG--SAV and $5\times10^{-10}$ for Predictive DG--SAV, compared with $3\times10^{-3}$ for SAV/CN and $10^{-2}$ for SAV/BDF2. These absolute gaps correspond to different energy splits and therefore should not be interpreted as a scale-invariant comparison: the DG--SAV scalar tracks only the nonlocal term, whereas the classical SAV scalar tracks both nonlinear contributions.
}

\begin{figure}[!htbp]
\centering
\includegraphics[width=0.95\textwidth]{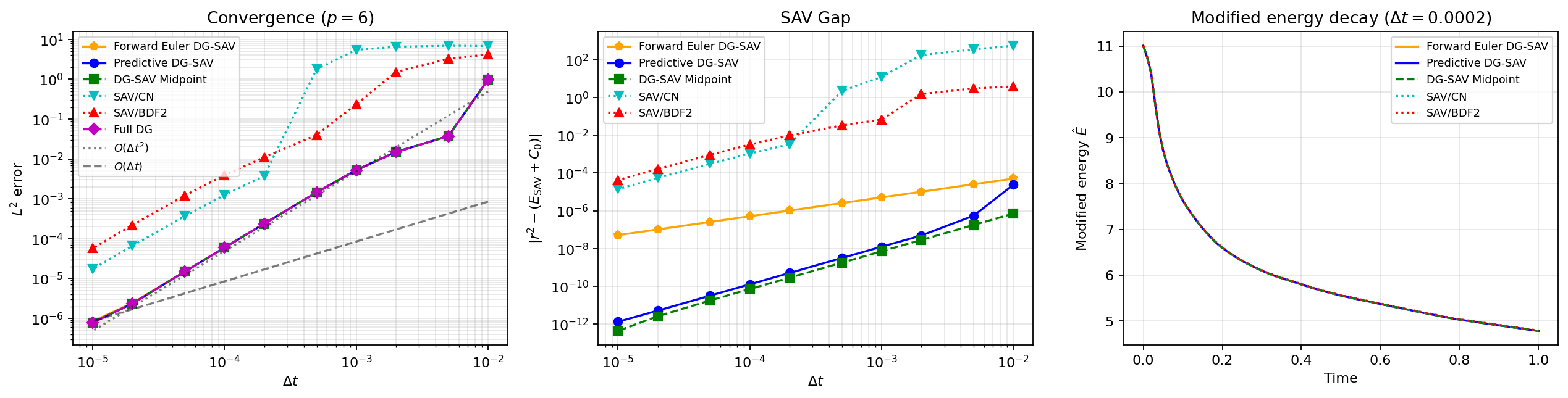}
\caption{$p=6$ nonlocal benchmark.
Left: $L^2$ convergence.
Center: SAV gap $|r^2-(\Esav+C_0)|$.
Right: modified energy $\hat E$ at $\Delta t=2\times10^{-4}$. The SAV and DG--SAV methods dissipate their modified energies; Full DG, which dissipates the original energy, is omitted.}
\label{fig:p6-convergence}
\end{figure}

\paragraph{{Efficiency and work--precision}}
{
\Cref{fig:p6-workprecision} shows that, at matched accuracy, Full DG is typically about twice as expensive as Predictive DG--SAV and about $1.2$--$1.5\times$ as expensive as Midpoint DG--SAV; both DG--SAV variants avoid evaluating the nonlocal AVF in every Picard iteration. The non-iterative SAV/CN and SAV/BDF2 steps are cheaper still, and their lower per-step cost can offset their larger error constants. SAV/BDF2 crosses Predictive DG--SAV near an $L^2$ error of $3.4\times10^{-3}$: BDF2 is cheaper at coarser tolerances, whereas Predictive DG--SAV is cheaper below that level. SAV/CN remains near $O(1)$ error until $\Delta t\le2\times10^{-4}$ but is cost-competitive once it converges. Thus, on this benchmark, the DG--SAV split trades some of the raw speed of classical SAV for Full DG-level accuracy and a substantially smaller SAV gap.
}

\begin{figure}[!htbp]
\centering
\includegraphics[width=0.65\textwidth]{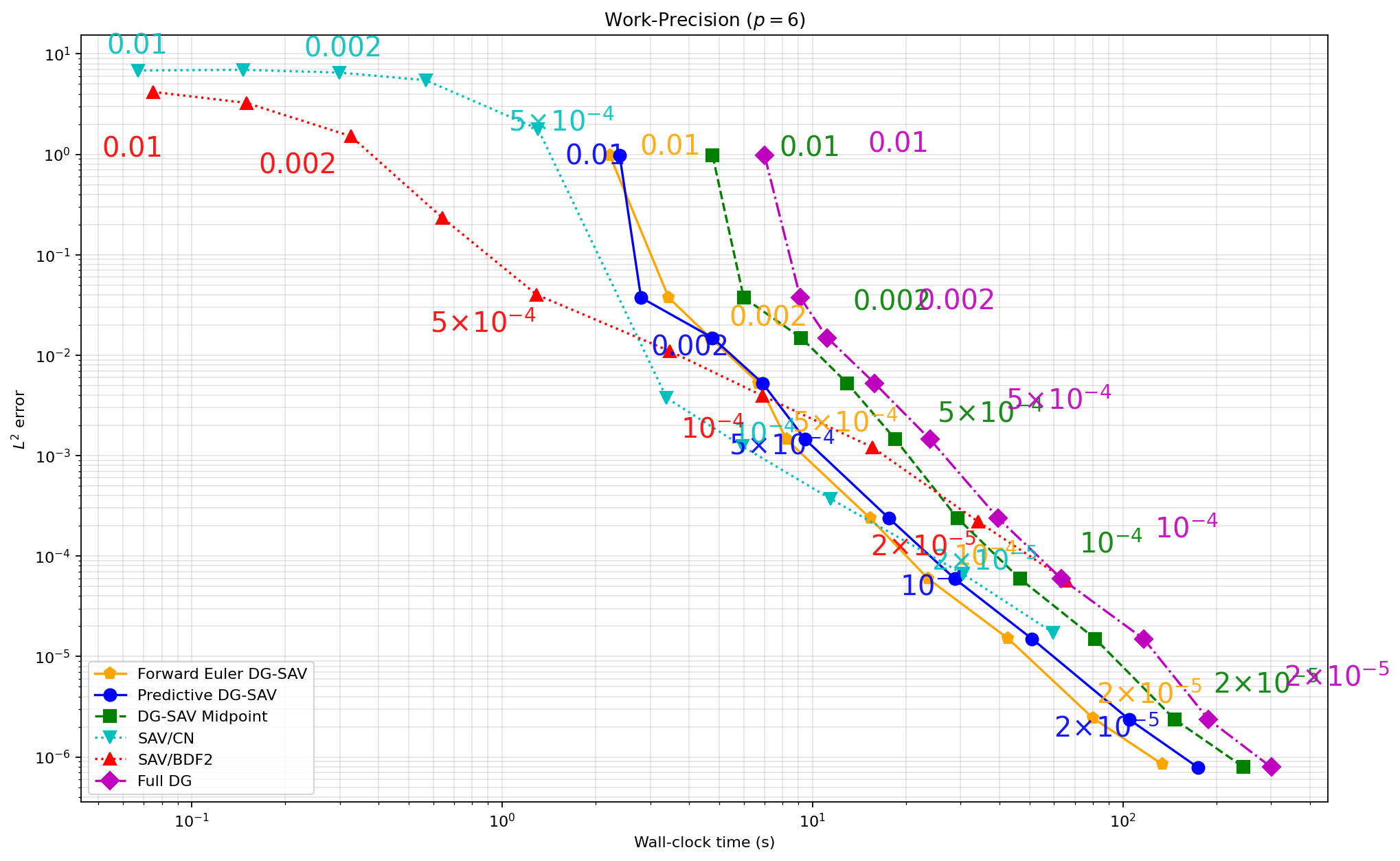}
\caption{Work--precision diagram for the $p=6$ benchmark ($T=1$), with the $\Delta t$ grid extended to $10^{-5}$. SAV/BDF2 crosses Predictive DG--SAV near an $L^2$ error of $3.4\times10^{-3}$, while Full DG is typically about twice as expensive as Predictive DG--SAV at matched accuracy. Times are solver-only means over repeated runs after one warm-up run.}
\label{fig:p6-workprecision}
\end{figure}

\subsection{Hamiltonian ODE: Double-Well Oscillator}
\label{sec:exp-doublewell}

{
This example illustrates the explicit regime of \cref{sec:separable}. For a separable Hamiltonian with quadratic kinetic energy, the natural split yields fully explicit Forward Euler and Predictive DG--SAV schemes.
}

\paragraph{{Problem and DG--SAV splitting}}
{
Consider the double-well oscillator $H(q,p)=\tfrac12p^2+\tfrac14(q^2-1)^2$, with $\Hdg(p)=\tfrac12p^2$, $\Hsav(q)=\tfrac14(q^2-1)^2$, and $C_0=1$. Under the paper-wide normalization $r=\sqrt{\Hsav(q)+C_0}$, the modified Hamiltonian $\Hhat(q,p,r)=\tfrac12p^2+r^2$ equals $H+C_0$ on the constraint manifold. Because $\Hdg$ is quadratic, its discrete gradient is the midpoint average in $p$. The Forward Euler and Predictive DG--SAV schemes of \cref{sec:separable} are therefore fully explicit and use the triangular update $r^{n+1}\to p^{n+1}\to q^{n+1}$. Both preserve $\Hhat$ exactly by \cref{cor:conservation-dissipation}.
}

\paragraph{{Experimental setup}}
{
We compare Forward Euler, Predictive, and Midpoint DG--SAV with Implicit Midpoint (a symplectic baseline) and Full DG (which preserves the original Hamiltonian). For \cref{tab:doublewell}, we use $T=10$, $h=0.05$, and five initial conditions, with an RK45 reference computed at tolerance $10^{-13}$. For each initial condition, the reported maximum errors are taken over the full trajectory; the table then averages those maxima, the final global errors, and the costs over the five cases. The long-time phase portrait and energy diagnostics in \cref{fig:dw_phase} use $(q^0,p^0)=(0,1.5)$, $T=50$, and $h=0.1$. Appendix~\ref{sec:supp-dw-convergence} reports a separate step-size study of the maximum original-energy defect.
}

\begin{table}[!htbp]
\centering
\caption{Double-well oscillator: averaged errors and cost over five initial conditions ($T=10$, $h=0.05$). Wall-clock values are solver-only arithmetic means after one warm-up (25 timed repetitions per method and initial condition).}
\label{tab:doublewell}
\resizebox{\textwidth}{!}{%
\begin{tabular}{lcccc}
\toprule
\textbf{Method} & \textbf{Max Mod.\ Err.} & \textbf{Max Original Err.} & \textbf{Endpoint Err.} & \textbf{Time/Step ($\mu$s)} \\
\midrule
Forward Euler DG-SAV & $2.84\times10^{-15}$ & $4.48\times10^{-1}$ & $8.13\times10^{-1}$ & $2.3$ \\
Predictive DG-SAV & $5.15\times10^{-15}$ & $1.08\times10^{-3}$ & $3.13\times10^{-3}$ & $2.3$ \\
Midpoint DG-SAV & $2.80\times10^{-15}$ & $6.28\times10^{-4}$ & $5.79\times10^{-3}$ & $183.0$ \\
Implicit Midpoint & --- & $6.99\times10^{-4}$ & $5.70\times10^{-3}$ & $98.2$ \\
Full DG & --- & $1.39\times10^{-15}$ & $5.22\times10^{-3}$ & $80.5$ \\
\bottomrule
\end{tabular}%
}
\end{table}

\begin{figure}[!htbp]
  \centering
  \includegraphics[width=0.80\textwidth]{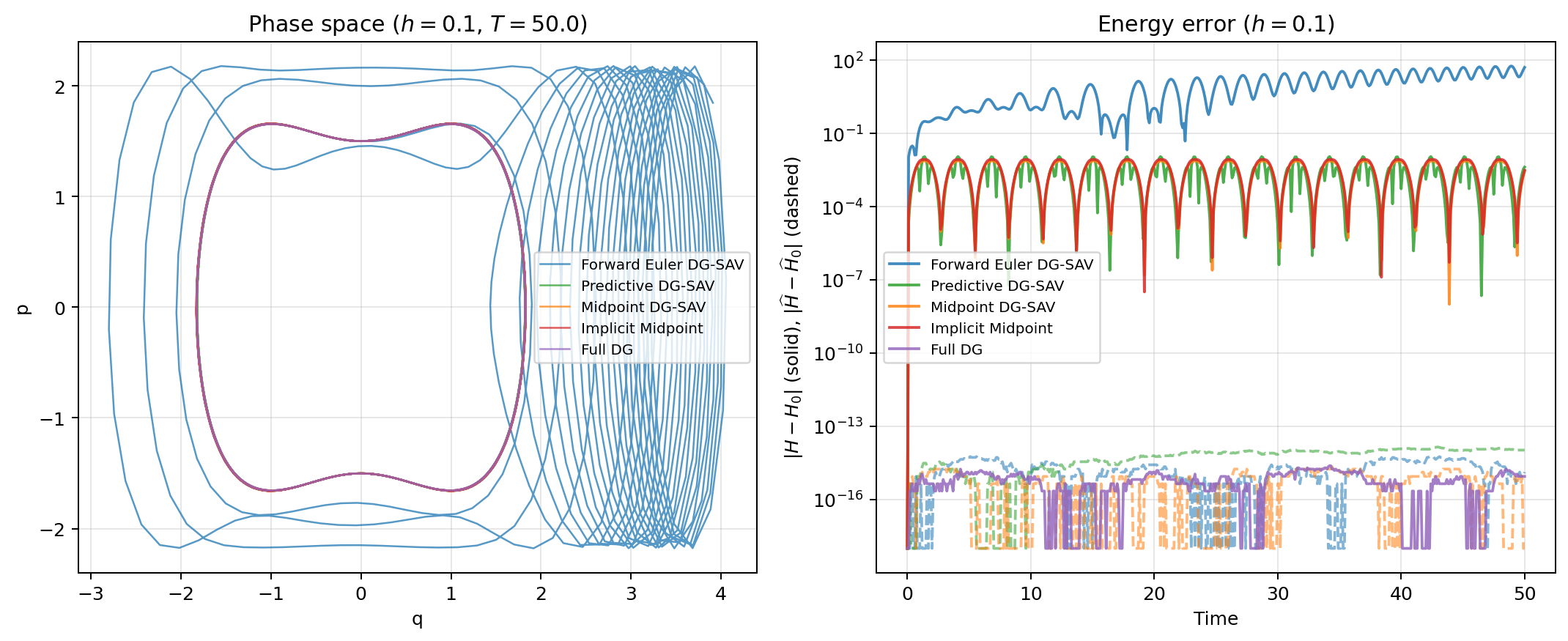}
  \caption{Double-well oscillator: phase portrait and energy behavior for representative step sizes.}
  \label{fig:dw_phase}
\end{figure}

\paragraph{{Discussion}}
{
The quadratic kinetic energy makes Forward Euler and Predictive DG--SAV fully explicit, with per-step costs of about $2.3\,\mu$s each, compared with $80$--$183\,\mu$s for the implicit methods in this implementation. Every DG--SAV variant conserves $\Hhat$ to machine precision. The original-energy error scales as $O(h)$ for Forward Euler and $O(h^2)$ for Predictive, with the latter comparable to implicit Midpoint DG--SAV. On this benchmark, Predictive DG--SAV combines second-order accuracy and exact modified-energy conservation with a speedup of roughly $80\times$ over Midpoint DG--SAV; Full DG preserves $H$ exactly but at a higher per-step cost.
}

\subsection{Poisson ODE with Nonlinear Casimir: Projected Tensor Construction}
\label{sec:exp-projected-casimir}

{
This example tests \cref{thm:projected-B}. For a nonlinear Casimir, the midpoint tensor $B(\bar z)$ need not satisfy the discrete Casimir condition; the projected tensor provides an explicit way to enforce it and recover exact Casimir preservation.
}

\paragraph{{Problem and methods}}
We construct a manufactured Poisson system in $\R^3$ with cubic Casimir $C(x) = x_1^3 + x_2^3 + x_3^3$ and Poisson tensor $B(x)\,v = \nabla C(x) \times v$, so that $B(x)\,\nabla C(x) = 0$ identically; the dynamics $\dot{x} = B(x)\,\nabla H(x)$ conserve~$C$ for any~$H$. We take the mixed quadratic-quartic Hamiltonian $H(x) = \tfrac{1}{2}\|x\|^2 + \tfrac{\beta}{4}\|x\|^4$ with $\beta = 1$, and the natural DG--SAV splitting $\Hdg(x) = \tfrac{1}{2}\|x\|^2$ (quadratic; AVF discrete gradient is the midpoint average) and $\Hsav(x) = \tfrac{\beta}{4}\|x\|^4$ (quartic; $C_0 = 1$). The exact componentwise discrete gradient of~$C$ is $(\widetilde{\nabla} C)_j = (x_j^+)^2 + x_j^+ x_j^- + (x_j^-)^2$, which reduces to $3x_j^2$ on the diagonal but does \emph{not} satisfy the discrete Casimir condition: $B(\bar{x})\,\widetilde{\nabla} C(x^+, x^-) \neq 0$ in general because $\nabla C(\bar x)$ and $\widetilde{\nabla} C$ differ by $O(h^2)$ terms that break the exact annihilation. Following \cref{thm:projected-B}, we restore the condition with the projected tensor $B_{\mathrm{proj}} = P\,B(\bar{x})\,P$, where $P = I - (g \otimes g)/\|g\|^2$ and $g = \widetilde{\nabla} C(x^+,x^-)$, which annihilates $g$ by construction. We compare four integrators: Implicit Midpoint with $B(\bar{x})$ and continuous gradients; DG--SAV Unprojected (discrete gradients, unprojected $B(\bar{x})$); DG--SAV Projected ($B_{\mathrm{proj}}$); and Full DG Projected. Implicit systems use \texttt{scipy.optimize.root} at tolerance $10^{-14}$, with initial condition $x^0 = (1.296, 0.648, 0.389)^\top$ ($\|x^0\| = 1.5$), $T = 1000$, and $h = 0.005$ (200{,}000 steps).

\begin{table}[!htbp]
\centering
\caption{Projected tensor example: conservation errors ($T=1000$, $h=0.005$, 200,000 steps).}
\label{tab:projected-casimir}
\begin{tabular}{lccc}
\toprule
\textbf{Method} & \textbf{Max $|\Delta H|$} & \textbf{Max $|\Delta\Hhat|$} & \textbf{Max $|\Delta C|$} \\
\midrule
Implicit Midpoint       & $1.5 \times 10^{-13}$ & N/A                    & $1.2 \times 10^{-3}$ \\
DG--SAV Unprojected     & $1.2 \times 10^{-13}$ & $3.6 \times 10^{-14}$  & $1.2 \times 10^{-3}$ \\
DG--SAV Projected       & $1.3 \times 10^{-13}$ & $6.0 \times 10^{-14}$  & $3.9 \times 10^{-13}$ \\
Full DG Projected       & $2.1 \times 10^{-13}$ & N/A                    & $2.7 \times 10^{-13}$ \\
\bottomrule
\end{tabular}
\end{table}

\begin{figure}[!htbp]
  \centering
  \includegraphics[width=0.95\textwidth]{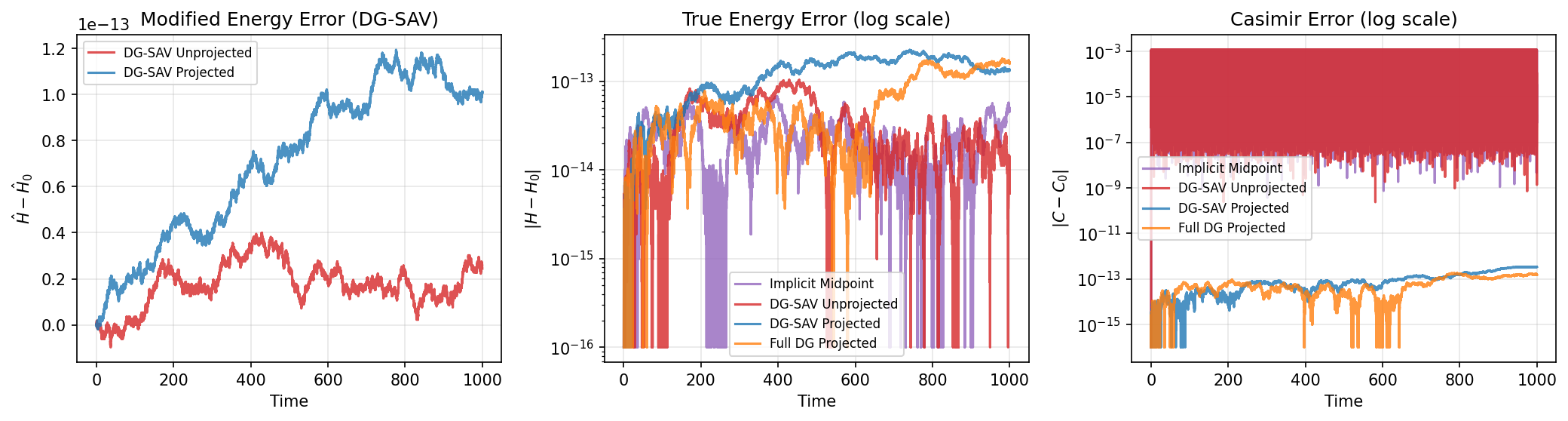}
  \caption{Projected tensor for Casimir preservation over $T=1000$ (200,000 steps).
  Left: modified energy error (DG--SAV variants only).
  Center: original energy error (log scale).
  Right: Casimir error (log scale).
  The unprojected methods exhibit bounded $O(10^{-3})$ Casimir drift, while the projected methods preserve $C$ to machine precision.}
  \label{fig:projected-casimir}
\end{figure}

\paragraph{{Discussion}}
{
The unprojected methods exhibit bounded $O(10^{-3})$ Casimir drift despite preserving energy to machine precision. The projected tensor enforces $B_{\mathrm{proj}}\widetilde{\nabla}C=0$ and reduces the Casimir error to approximately $10^{-13}$ without affecting energy conservation. The roundoff-level $|\Delta H|$ for all four methods is a consequence of the radial Hamiltonian $H=f(\|x\|^2)$ used in this test; for a generic quartic Hamiltonian, Implicit Midpoint instead exhibits the expected $O(h^2)$ oscillation in $H$. For Lie--Poisson systems with quadratic Casimirs, such as the rigid body, the midpoint tensor already satisfies the discrete condition. For nonlinear Casimirs, the projection in \cref{sec:casimir-projection} is one direct enforcement mechanism.
}

\paragraph{{Code availability}}
The source code and data used to generate the numerical results are being prepared for public release and will be made available with a subsequent version of this preprint.

\section{Conclusions and Future Research}
\label{sec:conclusions}

{
We have developed a DG--SAV framework for dissipative, Hamiltonian, and Poisson systems based on one extended discrete-gradient identity. The user-selected split $E=\Edg+\Esav$ interpolates between pure DG, which preserves the original energy law but may require a nonlinear solve, and SAV-type quadratization, which preserves a modified energy through cheaper updates. The three variants expose different accuracy--cost choices, while the projected Poisson construction also enforces selected Casimirs.

Natural extensions include adaptive or data-driven selection of the $\Edg/\Esav$ split; fourth- and higher-order methods based on symmetric composition or deferred correction; energy-aware step-size control; and applications to nonholonomic and other constrained systems with nonlinear potentials. The last direction can build on DG-based energy-preserving integrators~\cite{celledoni-Farre-Hoel-DMdD2019} and the broader nonholonomic literature surveyed in~\cite{ModinVerdier2017}.
}

\section*{Acknowledgments}

DMdD and MV acknowledge financial support from the Spanish Ministry of Science and Innovation under grants PID2022-137909NB-C21, PCI2024-155047-2 and from the Severo Ochoa Programme for Centres of Excellence in R\&D (CEX2023-001347-S).

\paragraph{{Use of artificial intelligence}}
{Generative AI tools were used to review and improve the writing of the manuscript and to assist with code for the numerical experiments. The authors take full responsibility for the content of the manuscript, including all mathematical statements, proofs, and numerical results.}

\clearpage
\appendix
\numberwithin{figure}{section}
\numberwithin{table}{section}

\begin{center}
\textbf{Supplementary Appendices}
\end{center}

\begin{center}
\textbf{Appendix contents}
\end{center}
\begingroup\small
\noindent\begin{tabular}{@{}p{\textwidth}@{}}
\hyperref[sec:supp-sav-discrete]{\ref*{sec:supp-sav-discrete} Explicit SAV discretization formulas}\\
\hyperref[sec:supp-numerics]{\ref*{sec:supp-numerics} Auxiliary numerical experiments}\\
\quad\hyperref[sec:supp-dw-convergence]{\ref*{sec:supp-dw-convergence} Double-well: maximum original-energy defect vs.\ step size}\\
\quad\hyperref[sec:supp-polynomial-ac]{\ref*{sec:supp-polynomial-ac} Polynomial Allen--Cahn: closed-form AVF regime}\\
\hyperref[sec:supp-proof-details]{\ref*{sec:supp-proof-details} Proof details: expansions of compressed steps}\\
\quad\hyperref[sec:supp-proof-theorem-44]{\ref*{sec:supp-proof-theorem-44} Comprehensive proof of Theorem~\ref*{thm:variants-accuracy}}\\
\quad\hyperref[sec:supp-proof-proposition-45]{\ref*{sec:supp-proof-proposition-45} Comprehensive proof of Proposition~\ref*{prop:well-posedness-diss}}\\
\quad\hyperref[sec:supp-proof-projected]{\ref*{sec:supp-proof-projected} Projected-tensor symmetry}
\end{tabular}
\endgroup

The appendices contain: (i) the standard SAV/CN and SAV/BDF2 formulas referenced in Section~\ref{sec:preliminaries} of the main text; (ii) two auxiliary experiments---a step-size study for the double-well oscillator and the polynomial Allen--Cahn closed-form-AVF regime; and (iii) detailed proofs of Theorem~\ref{thm:variants-accuracy} and Proposition~\ref{prop:well-posedness-diss}, together with the symmetry check for the projected tensor.

\section{Explicit SAV discretization formulas}
\label{sec:supp-sav-discrete}

{
We list the two standard second-order SAV discretizations referenced in Section~\ref{sec:preliminaries} of the main text. They are applied to $\dot x=-\nabla E(x)$ with $E=E_{\mathrm{lin}}+E_{\mathrm{nl}}$ and $r=\sqrt{E_{\mathrm{nl}}(x)+C_0}$.
}

\paragraph{SAV/CN (Crank--Nicolson)}
With $\bar x = \tfrac{1}{2}(x^{n+1}+x^n)$, an explicit $O(h^2)$ extrapolant $\bar x^* = \tfrac{1}{2}(3x^n - x^{n-1})$, and $\bar \Sigma := \sqrt{E_{\mathrm{nl}}(\bar x^*)+C_0}$, $\bar G_{\mathrm{nl}} := \nabla E_{\mathrm{nl}}(\bar x^*)$:
\begin{equation}\label{eq:supp-sav-cn}
\begin{aligned}
  \frac{x^{n+1} - x^n}{h} &= -\nabla E_{\mathrm{lin}}(\bar x) - \frac{r^{n+1}+r^n}{2\bar \Sigma}\,\bar G_{\mathrm{nl}}, \\[4pt]
  r^{n+1} - r^n &= \frac{\ip{\bar G_{\mathrm{nl}}}{x^{n+1}-x^n}}{2\bar \Sigma}.
\end{aligned}
\end{equation}
This scheme dissipates the single-level modified energy $\widehat{E}_{\mathrm{CN}}(x,r) = E_{\mathrm{lin}}(x) + r^2$ unconditionally in the time step~\cite[Theorem~2.1]{ShenXuYang2019}.

\paragraph{SAV/BDF2}
With the second-order extrapolant $x^* = 2x^n - x^{n-1}$, $\Sigma^* := \sqrt{E_{\mathrm{nl}}(x^*)+C_0}$, $G^*_{\mathrm{nl}} := \nabla E_{\mathrm{nl}}(x^*)$:
\begin{equation}\label{eq:supp-sav-bdf2}
\begin{aligned}
  \frac{3x^{n+1} - 4x^n + x^{n-1}}{2h} &= -\nabla E_{\mathrm{lin}}(x^{n+1}) - \frac{r^{n+1}}{\Sigma^*}\,G^*_{\mathrm{nl}}, \\[4pt]
  3r^{n+1} - 4r^n + r^{n-1} &= \frac{\ip{G^*_{\mathrm{nl}}}{3x^{n+1} - 4x^n + x^{n-1}}}{2\Sigma^*}.
\end{aligned}
\end{equation}
This scheme dissipates the two-level modified energy
\[
  \widehat{E}_{\mathrm{BDF2}}^{n+1,n} := \tfrac{1}{2}\bigl(E_{\mathrm{lin}}(x^{n+1}) + E_{\mathrm{lin}}(2x^{n+1}-x^n)\bigr) + \tfrac{1}{2}\bigl((r^{n+1})^2 + (2r^{n+1}-r^n)^2\bigr)
\]
unconditionally in the time step~\cite[Theorem~2.2]{ShenXuYang2019}. In both schemes, $\nabla E_{\mathrm{lin}}$ is linear (e.g., $\nabla E_{\mathrm{lin}}(x) = Ax$ for a constant SPSD matrix $A$), so the $x$-equation reduces to a linear system with \emph{constant coefficients}---the computational hallmark of SAV.

\section{Auxiliary numerical experiments}
\label{sec:supp-numerics}

\subsection{Double-well: maximum original-energy defect vs.\ step size}
\label{sec:supp-dw-convergence}

{
For the double-well oscillator in Section~\ref{sec:exp-doublewell} of the main text, we compute $\max_t|H(t)-H(0)|$ for $h\in[10^{-3},10^{-0.5}]$, using the barrier-crossing initial condition $(q^0,p^0)=(0,1.5)$ and $T=10$. The curves are consistent with $O(h)$ scaling for Forward Euler DG--SAV and $O(h^2)$ scaling for the Predictive and Midpoint variants.
}

\begin{figure}[H]
  \centering
  \includegraphics[width=0.65\textwidth]{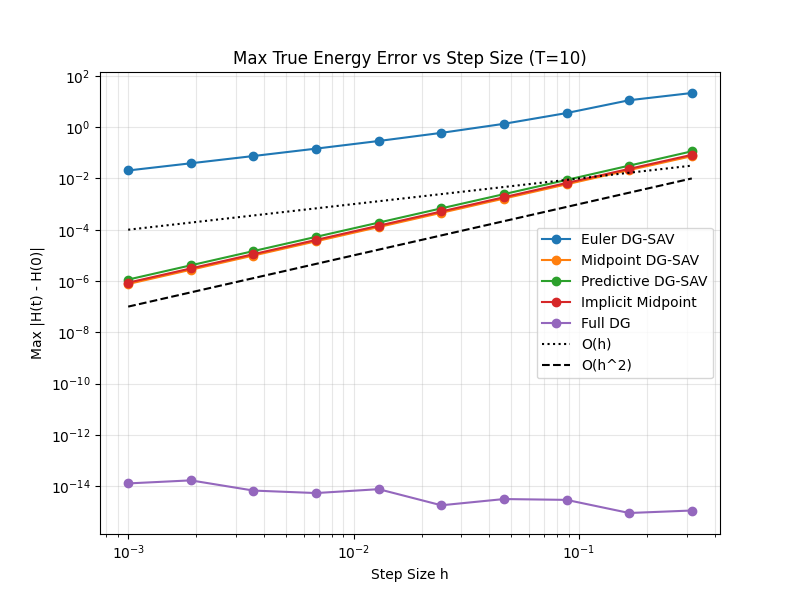}
  \caption{Double-well oscillator: maximum original-energy defect versus step size $h$ on a log--log scale.}
  \label{fig:supp-dw-energy-vs-h}
\end{figure}

\subsection{Polynomial Allen--Cahn: closed-form AVF regime}
\label{sec:supp-polynomial-ac}\label{sec:supp-ac-cost}

{
The polynomial Allen--Cahn equation provides a complementary regime to the logarithmic Flory--Huggins benchmark in Section~\ref{sec:num-allen-cahn} of the main text:
}
$u_t = \varepsilon^2 \Delta u - (u^3 - u)$,
{
with $E(u)=\int_\Omega[\tfrac{\varepsilon^2}{2}|\nabla u|^2+\tfrac14(u^2-1)^2]\,dx$. Since the bulk potential is quartic, its AVF discrete gradient has the pointwise closed form
\[
\widetilde\nabla_{\mathrm{AVF}}\Esav(v,w)
=\tfrac14(v^3+v^2w+vw^2+w^3)-\tfrac12(v+w),
\]
which requires neither inner integration nor transcendental functions. Full DG and Midpoint DG--SAV therefore have comparable Picard-iteration costs and accuracy; the non-iterative Predictive scheme is the principal lower-cost DG--SAV alternative. \Cref{tab:supp-polynomial-ac-cost} reports per-step diagnostics for $\Delta t=10^{-2}$, $N=128$, $\varepsilon=0.1$, $T=1$, and $u_0=\tanh((x-\pi)/\varepsilon)+0.1\sin(4x)$.
}

\begin{table}[H]
\centering
\caption{Polynomial Allen--Cahn: per-step cost and SAV gap ($\Delta t = 10^{-2}$, $T=1$). Full DG and Midpoint DG--SAV are essentially equivalent because the polynomial AVF is closed-form.}\label{tab:supp-polynomial-ac-cost}
\begin{tabular}{lcccc}
\toprule
& Iters/step & Time/step ($\mu$s) & SAV gap & Obs.\ order \\
\midrule
Forward Euler DG--SAV & 1.0  & 92  & $2.4\times 10^{-3}$ & 1 \\
Predictive DG--SAV & 1.0  & 120 & $3.2\times 10^{-4}$ & 2 \\
Midpoint DG--SAV   & 5.3  & 305 & $7.4\times 10^{-5}$ & 2 \\
SAV/CN             & 1.0  & 97  & $1.1\times 10^{-3}$ & 2 \\
SAV/BDF2           & 1.0  & 105 & $2.6\times 10^{-3}$ & 2 \\
Full DG            & 5.3  & 339 & 0 & 2 \\
\bottomrule
\end{tabular}
\end{table}

{
In this closed-form-AVF regime, DG--SAV has no cost advantage over Full DG unless the non-iterative Predictive variant is used. The intermediate accuracy--cost regime studied in the main text instead requires an AVF that is more expensive than one gradient evaluation, as occurs for the logarithmic Flory--Huggins potential and the nonlocal $p=6$ term. The polynomial double well does not have that property.
}

\section{Proof details: expansions of compressed steps}
\label{sec:supp-proof-details}

The proofs in the main text are intentionally compact. This appendix gives standalone details for the two main local-analysis results, using the same notation as the main text. Throughout, $\Sigma(x)=\sqrt{\Esav(x)+C_0}$ and $G(x)=\nabla\Esav(x)$, and the positivity assumption $\Esav+C_0\ge\delta>0$ is used to make $\Sigma^{-1}$ and all SAV coefficient ratios smooth and locally bounded.

\subsection{Comprehensive proof of Theorem~\ref{thm:variants-accuracy}}
\label{sec:supp-proof-theorem-44}

We prove the order statements by local truncation error estimates on the extended variable $y=(x,r)$, followed by the standard local-to-global argument for locally well-posed one-step methods. The exact extended solution satisfies the constraint $r(t)=\Sigma(x(t))$ and the smooth ODE
\[
  \dot x = S(x)\nabla E(x), \qquad
  \dot r = \frac{\ip{G(x)}{\dot x}}{2\Sigma(x)}.
\]
On a compact tube around the exact trajectory, the assumptions of the theorem imply that $S$, $G$, $\Sigma^{-1}$, and the relevant discrete-gradient maps are locally Lipschitz. Consequently, once a one-step defect is $O(h^{p+1})$, the usual stability estimate for one-step methods gives a global $O(h^p)$ error on fixed time intervals.

\paragraph{Preliminary midpoint expansions}
Let $x_m=x(t_n+h/2)$, $r_m=r(t_n+h/2)$, and let $\Delta x=x(t_{n+1})-x(t_n)$. Taylor expansion about $t_n+h/2$ gives
\begin{equation}\label{eq:supp-midpoint-taylor-xr}
  x(t_{n+1})-x(t_n) = h\dot x(t_n+h/2)+O(h^3), \qquad
  r(t_{n+1})-r(t_n) = h\dot r(t_n+h/2)+O(h^3),
\end{equation}
and
\begin{equation}\label{eq:supp-midpoint-averages}
  \tfrac12\bigl(x(t_{n+1})+x(t_n)\bigr)=x_m+O(h^2), \qquad
  \tfrac12\bigl(r(t_{n+1})+r(t_n)\bigr)=r_m+O(h^2).
\end{equation}
Because $r_m=\Sigma(x_m)$ and $\Sigma$ is bounded below by $\sqrt{\delta}$, any coefficient evaluation that is $O(h^2)$-close to $x_m$ yields
\begin{equation}\label{eq:supp-ratio-midpoint}
  \frac{\bar r}{\Sigma_*}=1+O(h^2)
\end{equation}
when $\bar r=\tfrac12(r(t_{n+1})+r(t_n))$ is computed from the exact solution.

The symmetric discrete gradient provides the second preliminary estimate. By Lemma~\ref{lem:symmDG-midpoint}, for exact endpoints,
\begin{equation}\label{eq:supp-dg-midpoint-estimate}
  \widetilde{\nabla}\Edg(x(t_{n+1}),x(t_n))
  =
  \nabla\Edg\!\left(\tfrac12(x(t_{n+1})+x(t_n))\right)+O(h^2)
  =
  \nabla\Edg(x_m)+O(h^2).
\end{equation}
For the Forward Euler variant only the weaker consequence
$\widetilde{\nabla}\Edg(x(t_{n+1}),x(t_n))=\nabla\Edg(x(t_n))+O(h)$
is needed.

\paragraph{Forward Euler: first order}
For Forward Euler, $x_*=x(t_n)$, $S_*=S(x(t_n))$, $\Sigma_*=\Sigma(x(t_n))$, and $G_*=G(x(t_n))$. Evaluating the method residual on exact data gives
\[
  \bar r = r(t_n)+O(h), \qquad
  \frac{\bar r}{\Sigma_*}=1+O(h),
\]
and, as noted above,
\[
  \widetilde{\nabla}\Edg(x(t_{n+1}),x(t_n))
  =
  \nabla\Edg(x(t_n))+O(h).
\]
Hence the effective $x$-gradient in the scheme satisfies
\[
\begin{aligned}
  \eta_* &{}=
  \widetilde{\nabla}\Edg(x(t_{n+1}),x(t_n))
  + \frac{\bar r}{\Sigma_*}G_* \\
  &{}= \nabla\Edg(x(t_n))+G(x(t_n))+O(h) \\
  &{}=
  \nabla E(x(t_n))+O(h).
\end{aligned}
\]
Since $S_*=S(x(t_n))$, the right-hand side of the $x$-equation is
$S(x(t_n))\nabla E(x(t_n))+O(h)=\dot x(t_n)+O(h)$. Therefore
\[
  x(t_{n+1})-x(t_n)-hS_*\eta_*=O(h^2).
\]
For the scalar equation,
\[
  \frac{\ip{G_*}{x(t_{n+1})-x(t_n)}}{2\Sigma_*}
  =
  h\,\frac{\ip{G(x(t_n))}{\dot x(t_n)}}{2\Sigma(x(t_n))}+O(h^2)
  =
  h\dot r(t_n)+O(h^2),
\]
so $r(t_{n+1})-r(t_n)-\ip{G_*}{\Delta x}/(2\Sigma_*)=O(h^2)$.
Thus the local defect is $O(h^2)$, and the global error is $O(h)$.

\paragraph{Midpoint: self-adjointness}
For the midpoint variant, $x_*=\bar x=\tfrac12(x^{n+1}+x^n)$ and $\bar r=\tfrac12(r^{n+1}+r^n)$. Interchange the two time levels and replace $h$ by $-h$. The quantities $\bar x$, $\bar r$, $S(\bar x)$, $\Sigma(\bar x)$, and $G(\bar x)$ are unchanged, while $x^{n+1}-x^n$ and $r^{n+1}-r^n$ change sign. The symmetric discrete gradient is invariant under the same interchange:
\[
  \widetilde{\nabla}\Edg(x^{n+1},x^n)
  =
  \widetilde{\nabla}\Edg(x^n,x^{n+1}).
\]
Therefore both equations of the midpoint method are transformed into themselves under the reversal $(x^n,r^n,h)\leftrightarrow(x^{n+1},r^{n+1},-h)$. The one-step map is self-adjoint, $\Phi_{-h}=\Phi_h^{-1}$.

\paragraph{Midpoint: second order}
We also verify the local defect directly. With exact endpoints, $\bar x=x_m+O(h^2)$ and $\bar r=r_m+O(h^2)$ by~\eqref{eq:supp-midpoint-averages}. Smoothness gives
\[
  S(\bar x)=S(x_m)+O(h^2), \quad
  G(\bar x)=G(x_m)+O(h^2), \quad
  \Sigma(\bar x)=\Sigma(x_m)+O(h^2),
\]
and \eqref{eq:supp-ratio-midpoint} gives $\bar r/\Sigma(\bar x)=1+O(h^2)$. Combining this with~\eqref{eq:supp-dg-midpoint-estimate},
\[
\begin{aligned}
  \eta_* &{}=
  \widetilde{\nabla}\Edg(x(t_{n+1}),x(t_n))
  +\frac{\bar r}{\Sigma(\bar x)}G(\bar x) \\
  &{}= \nabla\Edg(x_m)+G(x_m)+O(h^2) \\
  &{}=
  \nabla E(x_m)+O(h^2).
\end{aligned}
\]
Thus
\[
  hS(\bar x)\eta_*
  =
  hS(x_m)\nabla E(x_m)+O(h^3)
  =
  h\dot x(t_n+h/2)+O(h^3).
\]
The $x$-equation residual is $O(h^3)$ by~\eqref{eq:supp-midpoint-taylor-xr}. For the $r$-equation,
\[
  \frac{\ip{G(\bar x)}{x(t_{n+1})-x(t_n)}}{2\Sigma(\bar x)}
  =
  h\,\frac{\ip{G(x_m)}{\dot x(t_n+h/2)}}{2\Sigma(x_m)}+O(h^3)
  =
  h\dot r(t_n+h/2)+O(h^3),
\]
again by~\eqref{eq:supp-midpoint-taylor-xr}. Hence the full local defect is $O(h^3)$ and the global error is $O(h^2)$. This direct estimate is consistent with the standard symmetry/order-doubling argument: the method is self-adjoint and consistent, so its order is even; the first possible order is therefore two.

\paragraph{Predictive: predictor accuracy}
For the Predictive variant, the coefficient point is
\[
  x_{n+1/2}^{\mathrm{pred}}
  =
  x(t_n)+\frac h2 S(x(t_n))
  \left(\nabla\Edg(x(t_n))+\frac{r(t_n)}{\Sigma(x(t_n))}G(x(t_n))\right).
\]
On the constraint manifold $r(t_n)=\Sigma(x(t_n))$, the quantity in parentheses is $\nabla E(x(t_n))$, so
\[
  x_{n+1/2}^{\mathrm{pred}}
  =
  x(t_n)+\frac h2\dot x(t_n)
  =
  x_m+O(h^2).
\]
Consequently,
\begin{equation}\label{eq:supp-pred-coeff-accuracy}
  S_{\mathrm{pred}}=S(x_m)+O(h^2),\qquad
  G_{\mathrm{pred}}=G(x_m)+O(h^2),\qquad
  \Sigma_{\mathrm{pred}}=\Sigma(x_m)+O(h^2).
\end{equation}
Together with~\eqref{eq:supp-midpoint-averages}, this gives
\[
  \frac{\bar r}{\Sigma_{\mathrm{pred}}}
  =
  \frac{r_m+O(h^2)}{\Sigma(x_m)+O(h^2)}
  =
  1+O(h^2).
\]

\paragraph{Predictive: local defect}
Using~\eqref{eq:supp-dg-midpoint-estimate} and~\eqref{eq:supp-pred-coeff-accuracy},
\[
\begin{aligned}
  \eta_* &{}=
  \widetilde{\nabla}\Edg(x(t_{n+1}),x(t_n))
  + \frac{\bar r}{\Sigma_{\mathrm{pred}}}G_{\mathrm{pred}} \\
  &{}= \nabla\Edg(x_m)+G(x_m)+O(h^2) \\
  &{}=
  \nabla E(x_m)+O(h^2).
\end{aligned}
\]
Therefore
\[
  hS_{\mathrm{pred}}\eta_*
  =
  hS(x_m)\nabla E(x_m)+O(h^3)
  =
  h\dot x(t_n+h/2)+O(h^3),
\]
and the $x$-equation residual on exact endpoints is $O(h^3)$. Similarly,
\[
  \frac{\ip{G_{\mathrm{pred}}}{x(t_{n+1})-x(t_n)}}{2\Sigma_{\mathrm{pred}}}
  =
  h\,\frac{\ip{G(x_m)}{\dot x(t_n+h/2)}}{2\Sigma(x_m)}+O(h^3)
  =
  h\dot r(t_n+h/2)+O(h^3),
\]
{
so the $r$-equation residual is also $O(h^3)$. The contraction estimate underlying Proposition~\ref{prop:well-posedness-diss} gives local stability of the frozen-coefficient corrector for sufficiently small $h$. Applying that stability estimate to the two residuals yields an $O(h^3)$ numerical one-step defect in $(x,r)$ and hence second-order global accuracy.
}

\paragraph{Conservative structures}
The preceding consistency estimates use smoothness of $S(\cdot)$, smoothness and boundedness of $\Sigma^{-1}$, and the symmetry/midpoint consistency of the discrete gradient. They do not use negative semidefiniteness of $S$. Thus the same local truncation arguments apply verbatim when $S(x)=J(x)$ is skew-symmetric in the Hamiltonian or Poisson setting. Skew-symmetry changes the sign structure of the energy identity but not the order calculation.

\subsection{Comprehensive proof of Proposition~\ref{prop:well-posedness-diss}}
\label{sec:supp-proof-proposition-45}

For the Forward Euler and Predictive variants the coefficient triple $(S_*,\Sigma_*,G_*)$ is fixed before the implicit corrector is solved. Let
\[
  B_R(x^n)=\{x\in\R^d:\|x-x^n\|\le R\}
\]
and assume the hypotheses of Proposition~\ref{prop:well-posedness-diss}. We prove that, for sufficiently small $h$, the corrector has a unique solution with $x^{n+1}\in B_R(x^n)$.

\paragraph{Eliminating the scalar unknown}
The second DG--SAV equation gives, for any candidate $x$,
\[
  r(x)=r^n+\frac{\ip{G_*}{x-x^n}}{2\Sigma_*},\qquad
  \bar r(x)=\frac{r(x)+r^n}{2}
  =
  r^n+\frac{\ip{G_*}{x-x^n}}{4\Sigma_*}.
\]
Since $\Sigma_*^2\ge\delta>0$, this affine map is well-defined and satisfies
\begin{equation}\label{eq:supp-barr-lip}
  |\bar r(x)-\bar r(y)|
  \le
  \frac{\|G_*\|}{4\Sigma_*}\|x-y\|.
\end{equation}
Substituting $\bar r(x)$ into the $x$-equation gives the fixed-point map
\begin{equation}\label{eq:supp-fixed-point-map}
  T(x)
  =
  x^n
  + hS_*
  \left(
    \widetilde{\nabla}\Edg(x,x^n)
    +\frac{\bar r(x)}{\Sigma_*}G_*
  \right).
\end{equation}
Fixed points of $T$ are exactly the $x$-components of solutions of the original two-equation corrector; once such an $x$ is known, $r^{n+1}=r(x)$ is uniquely determined.

\paragraph{Contraction estimate}
For $x,y\in B_R(x^n)$, the assumed Lipschitz bound on the discrete gradient and~\eqref{eq:supp-barr-lip} give
\[
\begin{aligned}
  \|T(x)-T(y)\|
  &\le
  h\|S_*\|
  \left(
    \|\widetilde{\nabla}\Edg(x,x^n)-\widetilde{\nabla}\Edg(y,x^n)\|
    +\frac{|\bar r(x)-\bar r(y)|}{\Sigma_*}\|G_*\|
  \right) \\
  &\le
  h\mu_S
  \left(
    L_{\mathrm{DG}}(R)
    +\frac{\|G_*\|^2}{4\Sigma_*^2}
  \right)\|x-y\| \\
  &\le
  hL_R\|x-y\|,
\end{aligned}
\]
where
\begin{equation}\label{eq:supp-LR}
  L_R:=\mu_S\left(L_{\mathrm{DG}}(R)+\frac{\|G_*\|^2}{4\delta}\right).
\end{equation}
Thus $T$ is a contraction on $B_R(x^n)$ whenever $hL_R<1$.

\paragraph{Invariance of the ball}
It remains to ensure that $T$ maps $B_R(x^n)$ into itself. At the center of the ball,
\[
  T(x^n)-x^n
  =
  hS_*
  \left(
    \widetilde{\nabla}\Edg(x^n,x^n)+\frac{r^n}{\Sigma_*}G_*
  \right),
\]
so define
\[
  \rho_0 :=
  \left\|
    S_*
    \left(
      \nabla\Edg(x^n)+\frac{r^n}{\Sigma_*}G_*
    \right)
  \right\|,
\]
using consistency of the discrete gradient at the diagonal. For any $x\in B_R(x^n)$,
\[
  \|T(x)-x^n\|
  \le
  \|T(x)-T(x^n)\|+\|T(x^n)-x^n\|
  \le
  hL_RR+h\rho_0.
\]
Hence $T(B_R(x^n))\subset B_R(x^n)$ provided
\[
  h(L_RR+\rho_0)\le R.
\]

\paragraph{Choice of the step-size threshold}
One explicit admissible threshold is
\begin{equation}\label{eq:supp-h0}
  h_0
  =
  \min\left\{
    \frac{1}{2L_R},
    \frac{R}{L_RR+\rho_0}
  \right\},
\end{equation}
with the convention that $1/(2L_R)=+\infty$ if $L_R=0$. For every $0<h<h_0$, the map $T$ is a strict contraction on the complete metric space $B_R(x^n)$ and maps that ball into itself. Banach's fixed-point theorem gives a unique fixed point $x^{n+1}\in B_R(x^n)$. The corresponding scalar value
\[
  r^{n+1}=r^n+\frac{\ip{G_*}{x^{n+1}-x^n}}{2\Sigma_*}
\]
is then unique. This proves local existence and uniqueness for the frozen-coefficient Forward Euler and Predictive variants. The proof uses only the norm bound on $S_*$, not a sign condition, so it covers both dissipative and conservative choices of the structure operator.

\subsection{Projected-tensor interchange symmetry}
\label{sec:supp-proof-projected}

{
Theorem~\ref{thm:poisson-midpoint} assumes that the two-point tensor is invariant under interchange of its arguments,
\[
\widetilde B(z^+,z^-)=\widetilde B(z^-,z^+).
\]
The two tensors used in the main text have this property:
\begin{itemize}
  \item For $\widetilde B=B(\bar z)$, the result follows from $\bar z=(z^++z^-)/2$, which is unchanged by the interchange.
  \item For $\widetilde B_P=P\widetilde B_0P$, each symmetric Casimir discrete gradient satisfies $\widetilde\nabla C_j(z^+,z^-)=\widetilde\nabla C_j(z^-,z^+)$. Consequently, the rectangular matrix $\mathcal C=[\widetilde\nabla C_1\ \cdots\ \widetilde\nabla C_k]$ and the projector $P=I-\mathcal C(\mathcal C^\top\mathcal C)^{-1}\mathcal C^\top$ are invariant under the interchange. If $\widetilde B_0$ has the same invariance, as it does for $\widetilde B_0=B(\bar z)$, then so does $\widetilde B_P$.
\end{itemize}
This verifies the interchange-invariance assumption of Theorem~\ref{thm:poisson-midpoint} for $\widetilde B=B(\bar z)$ in~\eqref{eq:poisson-midpoint} and for the projected variant used in Section~\ref{sec:exp-projected-casimir}. Skew-symmetry and consistency follow from Theorem~\ref{thm:projected-B}; the required regularity follows on regions where $\mathcal C^\top\mathcal C$ remains invertible.
}

\bibliographystyle{siamplain}
\bibliography{references}

\end{document}